\begin{document}

\newtheorem{assumption}{Assumption}[section]
\newtheorem{definition}{Definition}[section]
\newtheorem{lemma}{Lemma}[section]
\newtheorem{proposition}{Proposition}[section]
\newtheorem{theorem}{Theorem}[section]
\newtheorem{corollary}{Corollary}[section]
\newtheorem{remark}{Remark}[section]

\title{Dynamical Equivalence and Linear Conjugacy of Chemical Reaction Networks: New Results and Methods}
\author{Matthew D. Johnston$^a$, David Siegel$^a$ and G\'{a}bor Szederk\'{e}nyi$^{b,c}$ \bigskip \\
${}^a$ Department of Applied Mathematics,\\
University of Waterloo,\\
Waterloo, Ontario, Canada N2L 3G1\\
${}^b$ (Bio)Process Engineering Group, IIM-CSIC,\\ Spanish National Research Council,\\ C/Eduardo Cabello, 6, 36208 Vigo, Spain \\
${}^c$ Computer and Automation Research Institute,\\
Hungarian Academy of Sciences\\
H-1518, P.O. Box 63, Budapest, Hungary}
\date{}
\maketitle

\tableofcontents

\bigskip

\begin{abstract}

In the first part of this paper, we propose new optimization-based methods for the computation of preferred (dense, sparse, reversible, detailed and complex balanced) linearly conjugate reaction network structures with mass action dynamics. The developed methods are extensions of previously published results on dynamically equivalent reaction networks and are based on mixed-integer linear programming. As related theoretical contributions we show that (i) dense linearly conjugate networks define a unique super-structure for any positive diagonal state transformation if the set of chemical complexes is given, and (ii) the existence of linearly conjugate detailed balanced and complex balanced networks do not depend on the selection of equilibrium points. In the second part of the paper it is shown that determining dynamically equivalent realizations to a network that is structurally fixed but parametrically not can also be written and solved as a mixed-integer linear programming problem. Several examples illustrate the presented computation methods.

\end{abstract}

\noindent \textbf{Keywords:} chemical kinetics; stability theory; weak reversibility; linear programming; dynamical equivalence \newline \textbf{AMS Subject Classifications:} 80A30, 90C35.

\bigskip

\section{Introduction}

The mathematical study of chemical reaction networks is a rapidly growing field that has been applied recently to research problems in industrial chemistry, systems biology, gene regulation, and general nonlinear systems theory, among others \cite{C-P-F,H2,S}. There has also been significant theoretical work in the literature on such topics as persistence \cite{A,A-S,A3,J-S1,J-S3}, multistability \cite{C-F1, C-F2,S-F}, monotonicity \cite{B-A,B2}, the global attractor conjecture for complex balanced systems \cite{A,A-S,C-D-S-S,C-N-P}, lumping \cite{Fa,L-R1,T-L-R-T,W-K}, and conjugacy of reaction networks \cite{C-P,J-S2}.

One line of research which has been attracting increased attention has been that of determining when two reaction networks exhibit the same qualitative dynamics despite disparate network structure. In \cite{C-P} and \cite{Sz1}, the authors complete the question of what network structures can give rise to the same set of governing differential equations and therefore exhibit identical dynamics. This work was extended in \cite{J-S2} to networks which do not necessarily have the same set of differential equations but rather have trajectories related by a non-trivial linear transformation. Similar ground has also been touched in the papers \cite{L-R1} and \cite{Fa} which deal with properties of linear lumpings, linear mappings which potentially reduce the dimension of the species set.

In this paper we look at the problem of algorithmically determining when a network is linearly conjugate to another network satisfying specified conditions. This problem was first addressed in \cite{Sz2} where the author presents a mixed-integer linear programming (MILP) algorithm capable of determining sparse and dense realizations, i.e. networks with the fewest and greatest number of reactions capable of generating the given dynamics. The algorithm was extended to complex and detailed balanced networks in \cite{Sz-H}, reversible networks in \cite{Sz-H-P}, weakly reversible networks in \cite{Sz-H-T}, and linear conjugate networks in \cite{J-S4}, where a computationally more efficient procedure for determining weak reversibility was also presented.

We will continue in this paper the development and application of this MILP framework to linearly conjugate networks. In particular, we will give conditions which can be used to find sparse and dense linearly conjugate networks which are detailed and complex balanced, fully reversible, and which contain the greatest and fewest number of complexes. We will also expand the original MILP algorithm for finding sparse and dense realizations to find alternative realizations to a given reaction network when the network structure is fixed but the parameter values are not. Since this algorithm works without having to have the rate constants specified beforehand, this will allow us to answer questions about the reaction mechanism itself.

\section{Background}

In this section we present the terminology and notation relevant to chemical reaction networks and the main results from the literature upon which we will be building.

\subsection{Chemical Reaction Networks}

We will consider the chemical \emph{species} or \emph{reactants} of a network to be given by the set $\mathcal{S} = \left\{ X_1, X_2, \ldots, X_n \right\}$. The combined elements on the left-hand and right-hand side of a reaction are given by linear combinations of these species. These combined terms are called \emph{complexes} and will be denoted by the set $\mathcal{C} = \left\{ C_1, C_2, \ldots, C_m \right\}$ where
\[C_i = \sum_{j=1}^n \alpha_{ij} X_j, \; \; \; i=1, \ldots, m\]
and the $\alpha_{ij}$ are nonnegative integers called the \emph{stoichiometric coefficients}. We define the reaction set to be $\mathcal{R} = \left\{ (C_i,C_j) \; | \; C_i \mbox{ reacts to form } C_j \right\}$ where the property $(C_i,C_j) \in \mathcal{R}$ will more commonly be denoted $C_i \to C_j$. To each $(C_i,C_j) \in \mathcal{R}$ we will associate a positive \emph{rate constant} $k(i,j) > 0$ and to each $(C_i,C_j) \not\in \mathcal{R}$ we will set $k(i,j) = 0$. The triplet $\mathcal{N} = (\mathcal{S}, \mathcal{C}, \mathcal{R})$ will be called the \emph{chemical reaction network}.

The above formulation naturally gives rise to a directed graph $G(V,E)$ where the set of vertices is given by $V = \mathcal{C}$, the set of directed edges is given by $E = \mathcal{R}$, and the rate constants $k(i,j)$ correspond to the weights of the edges from $C_i$ to $C_j$. In the literature this has been termed the \emph{reaction graph} of the network \cite{H-J1}. Since complexes may be involved in more than one reaction, as a product or a reactant, there is further graph theory we may consider. A \emph{linkage class} is a maximally connected set of complexes, that is to say, two complexes are in the same linkage class if and only if there is a sequence of reactions in the reaction graph (of either direction) which connects them. A reaction network is called \emph{reversible} if $C_i \to C_j$ for any $C_i, C_j \in \mathcal{C}$ implies $C_j \to C_i$. A reaction network is called \emph{weakly reversible} if $C_i \to C_j$ for any $C_i, C_j \in \mathcal{C}$ implies there is some sequence of complexes such that $C_i = C_{\mu(1)} \to C_{\mu(2)} \to \cdots \to C_{\mu(l-1)} \to C_{\mu(l)} = C_j$.

A directed graph is called \emph{strongly connected} if there exists a directed path from each vertex to every other vertex. A \emph{strongly connected component} of a directed graph is a maximal set of vertices for which paths exists from each vertex in the set to every other vertex in the set. For a weakly reversible network, the linkage classes clearly correspond to the strongly connected components of the reaction graph.

Assuming mass-action kinetics, the dynamics of the specie concentrations over time is governed by the set of differential equations
\begin{equation}
\label{de}
\frac{d\mathbf{x}}{dt} = Y \cdot A_k \cdot \Psi(\mathbf{x})
\end{equation}
where $\mathbf{x} = [ x_1 \; x_2 \; \cdots \; x_n ]^T$ is the vector of reactant concentrations. The \emph{stoichiometric matrix} $Y$ contains entries $[Y]_{ij} = \alpha_{ji}$ and the \emph{Kirchhoff} or \emph{kinetics} matrix $A_k$ is given by
\begin{equation}
\label{kinetics}
[A_k]_{ij} = \left\{ \begin{array}{cll} -\sum_{l=1,l \not= i}^m k(i,l), & \mbox{  if  } & i = j \\ k(j,i) & \mbox{  if  } & i \not= j. \end{array} \right.
\end{equation}
When we speak of the \emph{structure} of a kinetics matrix, we will be referring to the distribution of positive and zero entries, which determines the network structure of the corresponding reaction graph. Finally, the \emph{mass-action vector} $\Psi(\mathbf{x})$ is given by
\begin{equation}
\label{psi}
\Psi_j(\mathbf{x}) = \prod_{i=1}^n x_i^{[Y]_{ij}}, \; \; \; j=1, \ldots, m.
\end{equation}

\subsection{Linearly Conjugate Networks}

Under the assumption of mass-action kinetics, it is possible for the trajectories of two reaction networks $\mathcal{N}$ and $\mathcal{N}'$ to be related by a linear transformation and therefore share many of the same qualitative features (e.g. number and stability of equilibria, persistence/extinction of species, dimensions of invariant spaces, etc.). This phenomenon was termed \emph{linear conjugacy} in \cite{J-S2}.

For completeness, we include the formal definition of linear conjugacy as presented in \cite{J-S2}. We will let $\Phi(\mathbf{x}_0,t)$ denote the flow of (\ref{de}) associated with $\mathcal{N}$ and $\Psi(\mathbf{x}_0,t)$ denote the flow of (\ref{de}) associated with $\mathcal{N}'$.

\begin{definition}
\label{conjugate1}
We will say two chemical reaction networks $\mathcal{N}$ and $\mathcal{N}'$ are \textbf{linearly conjugate} if there exists a linear function $\mathbf{h}: \mathbb{R}^n_{>0} \mapsto \mathbb{R}_{>0}^n$ such that $\mathbf{h}(\Phi(\mathbf{x}_0,t))=\Psi(\mathbf{h}(\mathbf{x}_0),t)$ for all $\mathbf{x}_0 \in \mathbb{R}_{>0}^n$.
\end{definition}
\noindent It is known that linear transformations $\mathbf{h}: \mathbb{R}^n_{>0} \mapsto \mathbb{R}_{>0}^n$ can consist of at most positive scaling and reindexing of coordinates (Lemma 3.1, \cite{J-S2}). Linear conjugacy has been subsequently studied from a computational point of view in \cite{J-S4}.

Linear conjugacy is a generalization of the concept of \emph{dynamical equivalence} whereby two reaction networks with different topological network structure can generate the same exact set of differential equations (\ref{de}). Two dynamically equivalent networks $\mathcal{N}$ and $\mathcal{N}'$ are said to be alternate \emph{realizations} of the kinetics (\ref{de}), although it is sometimes preferable to say that $\mathcal{N}$ is an alternative realization of $\mathcal{N}'$ or vice-versa. Since the case of two networks being realizations of the same kinetics is encompassed as a special case of linear conjugacy taking the transformation to be the identity, we will focus on linearly conjugate networks.

In general practice, we are given a network $\mathcal{N}$ and asked to determine its dynamical behaviour. This is often a challenging problem; however, we may notice that the network $\mathcal{N}$ \emph{behaves like} one from a well-studied class of networks and therefore suspect a relationship which preserves key qualitative aspects of the dynamics. The theory of linear conjugacy can provide a powerful tool in analyzing such networks. If the network can be shown to be linearly conjugate to a network $\mathcal{N}'$ from the class of networks with understood dynamics, the dynamics of $\mathcal{N}'$ are transferred to $\mathcal{N}$.

This raises the question of how to find a linearly conjugate network $\mathcal{N}'$ when only the original network $\mathcal{N}$ is given. This was studied in \cite{J-S4} where the authors built upon the linear programming algorithm introduced in \cite{Sz2}. We can impose that a network $\mathcal{N}'$ be linearly conjugate to our given network $\mathcal{N}$ with the set of linear constraints
\begin{equation}
\label{conjugate}
\mbox{\textbf{(LC)}} \; \; \left\{ \; \; \begin{array}{ll} & \displaystyle{Y \cdot A_b = T^{-1} \cdot M} \\ & \displaystyle{\sum_{i=1}^m [A_b]_{ij} = 0, \; \; \; j=1, \ldots, m} \\ & \displaystyle{[A_b]_{ij} \geq 0, \; \; \; i,j = 1, \ldots, m, \; i \not= j} \\ & \displaystyle{[A_b]_{ii} \leq 0, \; \; \; i = 1, \ldots, m} \\ &\displaystyle{\epsilon  \leq c_j \leq 1/\epsilon, \; \; \; j=1, \ldots, n} \end{array} \right.
\end{equation}
where $0 < \epsilon \ll 1$, and the matrices $M \in \mathbb{R}^{n \times m}$ and $T \in \mathbb{R}^{n \times n}$ are given by:
\begin{eqnarray}
\label{M} M & = & Y \cdot A_k, \mbox{ and} \\
\label{T} T & = & \mbox{diag}\left\{ \mathbf{c} \right\}.
\end{eqnarray}

The kinetics matrix for the network $\mathcal{N}'$ can by constructed from $A_b \in \mathbb{R}^{m \times m}$ and $\mathbf{c} \in \mathbb{R}_{>0}^n$ by the relation
\begin{equation}
\label{newrateconstants}
A_k' = A_b \cdot \mbox{diag} \left\{ \Psi (\mathbf{c}) \right\}.
\end{equation}
Finding a network satisfying (\ref{conjugate}) and then solving (\ref{newrateconstants}) is sufficient to determine a network $\mathcal{N}'$ which is linearly conjugate to $\mathcal{N}$ via the transformation $\mathbf{h}(\mathbf{x}) = T^{-1} \mathbf{x}$.

\subsection{Sparse and Dense Linearly Conjugate Networks}
\label{section1}


In order to place the problem within a linear programming framework, we need to choose an objective function to optimize. An appropriate choice of such a function is not obvious and may vary depending on the intended application.

One particularly intuitive choice, which was introduced in \cite{Sz2} and has been widely used since, is to search for networks $\mathcal{N}'$ with the fewest and greatest number of reactions (\emph{sparse} and \emph{dense} networks, respectively). A sparse (respectively, dense) linearly conjugate network is given by a matrix $A_k'$ satisfying (\ref{conjugate}) with the most (respectively, least) off-diagonal entries which are zeroes. Since the structure of $A_k'$ and $A_b$ are the same, a correspondence between the non-zero off-diagonal entries in $A_k'$ and a positive integer value can be made by considering the binary variables $\delta_{ij} \in \left\{ 0, 1 \right\}$ which will keep track of whether a reaction is `on' or `off', i.e. we have
\[\delta_{ij} = 1 \Longleftrightarrow [A_b]_{ij} > \epsilon, \; \; \; i, j = 1, \ldots, m, \; \; i \not= j\]
where $0 < \epsilon \ll 1$ is sufficiently small and can be chosen the same as in (\ref{conjugate}), and where the symbol `$\Longleftrightarrow$' denotes the logical relation `if and only if'. These proposition logic constraints for the structure of a network can then be formulated as the following linear mixed-integer constraints (see, for example, \cite{R-G}):
\begin{equation}
\label{density}
\mbox{\textbf{(S)}} \; \; \left\{ \; \; \begin{array}{ll} & \displaystyle{0 \leq [A_b]_{ij}-\epsilon \delta_{ij}, \; \; \; i,j = 1, \ldots, m, \; \; i \not= j} \\ & \displaystyle{0 \leq -[A_b]_{ij}+u_{ij} \delta_{ij}, \; \; \; i,j = 1, \ldots, m, \; \; i \not= j} \\ & \displaystyle{\delta_{ij} \in \left\{ 0, 1 \right\}, \; \; \; i,j = 1, \ldots, m, i \not= j,} \end{array} \right.
\end{equation}
where $u_{ij} > 0$ for $i, j = 1, \ldots, m, i \not= j,$ are appropriate upper bounds for the reaction rate coefficients. The number of reactions present in the network corresponding to $A_k$ is then given by the sum of the $\delta_{ij}$'s so that the problem of determining a sparse network corresponds to satisfying the objective function
\begin{equation}
\label{sparse}
\mbox{\textbf{(Sparse)}} \; \; \left\{ \; \; \; \; \; \; \; \; \mbox{minimize} \; \; \; \; \; \sum_{i,j=1, i \not= j}^m \delta_{ij} \right.
\end{equation}
over the constraint sets (\ref{conjugate}) and (\ref{density}). Finding a dense network corresponds to maximizing the same function, which can also be stated as a minimization problem as
\begin{equation}
\label{dense}
\mbox{\textbf{(Dense)}} \; \; \left\{ \; \; \; \; \; \; \; \; \mbox{minimize} \; \; \; \; \; \sum_{i,j=1, i \not= j}^m -\delta_{ij}. \right.
\end{equation}

It is known that, for trivial linear conjugacies, the structure of the dense realization contains the structure of all other trivial linear conjugacies as a subset (Theorem 3.1, \cite{Sz-H-P}). We now prove the comparable result for non-trivial linear conjugacies.

\begin{theorem}
\label{densestructure}
Let $\mathcal{N}$ be a chemical reaction network. Suppose that the reaction network $\mathcal{N}'$ is linearly conjugate to $\mathcal{N}$ and dense. Suppose that $\mathcal{N}''$ is also linearly conjugate to $\mathcal{N}$. Then the directed unweighted graph of $\mathcal{N}''$ is a subset of the directed unweighted graph of $\mathcal{N}'$.
\end{theorem}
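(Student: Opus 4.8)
The plan is to reduce the statement to a convexity argument about the feasible set of the linear-algebraic constraints defining linear conjugacy. First I would recall that, by the discussion around \eqref{conjugate} and \eqref{newrateconstants}, any network linearly conjugate to $\mathcal{N}$ arises from a pair $(A_b, \mathbf{c})$ with $\mathbf{c} \in \mathbb{R}_{>0}^n$ and $A_b$ a Kirchhoff matrix satisfying $Y \cdot A_b = T^{-1} M$ with $T = \mathrm{diag}\{\mathbf{c}\}$. Writing $M_c := T M = \mathrm{diag}\{\mathbf{c}\} \cdot M$, the condition becomes $Y \cdot A_b = M_c$ — note $T^{-1}$ on the right is replaced by left-multiplying through by $T$; I will be careful here since it is precisely this rescaling that makes the $\mathbf{c}$-dependence nonlinear and is where the trivial-conjugacy proof of Theorem 3.1 of \cite{Sz-H-P} does not immediately apply.

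The key step is to fix the transformation vector and argue structure-wise. Suppose $\mathcal{N}''$ is linearly conjugate to $\mathcal{N}$ via some $\mathbf{c}'' \in \mathbb{R}_{>0}^n$, with Kirchhoff matrix $A_b''$. I claim that for \emph{this same} $\mathbf{c}''$ there is a dense realization, and moreover that any dense network in the sense of \eqref{dense} must, up to the choice of $\mathbf{c}$, dominate every network conjugate to $\mathcal{N}$ with that same $\mathbf{c}$. Concretely: the set of $A_b$ with $Y A_b = \mathrm{diag}\{\mathbf{c}''\} M$, $\sum_i [A_b]_{ij} = 0$, $[A_b]_{ij} \ge 0$ for $i \ne j$ is a (nonempty, since $A_b''$ lies in it) polyhedron, hence convex. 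Taking a generic convex combination — for instance the average of $A_b''$ with any other feasible $A_b$, or a point in the relative interior — yields a feasible $\hat{A}_b$ whose off-diagonal support is the \emph{union} of the supports of all feasible matrices; this is the standard observation that the support of a relative-interior point of a polyhedron contained in the nonnegative orthant is maximal. Thus there is a realization $\hat{\mathcal{N}}$, conjugate to $\mathcal{N}$ via $\mathbf{c}''$, whose unweighted directed graph contains that of $\mathcal{N}''$. Since $\hat{\mathcal{N}}$ is itself linearly conjugate to $\mathcal{N}$ and $\mathcal{N}'$ is dense among \emph{all} linearly conjugate networks, the reaction count of $\mathcal{N}'$ is at least that of $\hat{\mathcal{N}}$.

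It remains to upgrade "at least as many reactions" to "contains as a subgraph." For this I would invoke the same relative-interior/convexity mechanism once more, now allowing $\mathbf{c}$ to vary: form the union over all feasible $(A_b,\mathbf{c})$ and show the maximal support is again attained simultaneously. The cleanest route is to observe that if $(A_b^{(1)},\mathbf{c}^{(1)})$ and $(A_b^{(2)},\mathbf{c}^{(2)})$ are both feasible, then $A_k'^{(1)}$ and $A_k'^{(2)}$ (via \eqref{newrateconstants}) are genuine Kirchhoff matrices realizing $M$ up to the respective scalings, and one can pass to a common refinement: the support-union is realized by taking, say, $\mathbf{c} = \mathbf{c}^{(1)}$ and perturbing $A_b^{(1)}$ within its feasible polyhedron toward the relative interior to capture the support of $A_b^{(1)}$, then separately handle $\mathbf{c}^{(2)}$ — here I would lean on the fact (Lemma 3.1 of \cite{J-S2}) that the conjugating maps are diagonal, so different choices of $\mathbf{c}$ do not interact to create or destroy individual reactions, only rescale rate constants. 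Hence the dense realization $\mathcal{N}'$ realizes the maximal off-diagonal support over all linearly conjugate networks, and since $\mathcal{N}''$ is linearly conjugate to $\mathcal{N}$, its support is contained in that of $\mathcal{N}'$, which is exactly the claimed graph inclusion. The main obstacle I anticipate is the last step: making precise that varying $\mathbf{c}$ cannot enlarge the attainable support beyond what a single well-chosen relative-interior point already attains — equivalently, that "dense" is insensitive to the transformation — and this is likely where the bulk of the technical work lies.
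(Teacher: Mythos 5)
Your first step (fix the conjugacy constants $\mathbf{c}''$, observe that the set of admissible Kirchhoff matrices $A_b$ for that fixed $\mathbf{c}''$ is a polyhedron, and take a relative-interior point to get a single feasible matrix whose off-diagonal support is the union of all supports attainable with that $\mathbf{c}''$) is sound; it is essentially the trivial-conjugacy argument of Theorem 3.1 of \cite{Sz-H-P}. But the theorem you are proving requires comparing supports across \emph{different} conjugacy constants, and this is exactly the step you leave open. The feasibility condition $T \cdot Y \cdot A_b = M$ is bilinear in $(\mathbf{c}, A_b)$, so the full feasible set is not convex, and your appeal to Lemma 3.1 of \cite{J-S2} ("different choices of $\mathbf{c}$ do not interact to create or destroy individual reactions, only rescale rate constants") is not an argument: that lemma only says the conjugating map is a positive diagonal scaling, and different diagonal scalings genuinely give different polyhedra of admissible $A_b$ with potentially different maximal supports. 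Your intermediate conclusion that $\mathcal{N}'$ has at least as many reactions as $\hat{\mathcal{N}}$ is a cardinality statement and cannot be upgraded to containment without the missing step. You correctly flag this as where the technical work lies, but you do not supply it, so the proof is incomplete.

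The missing idea is a superposition trick that the paper uses in place of convexity. If $Y\cdot A_k = T'\cdot Y\cdot A_b'$ and $Y\cdot A_k = T''\cdot Y\cdot A_b''$, set $Q = T'\cdot (T'')^{-1}$ and compute $T'\cdot Y\cdot (A_b'+A_b'') = (Q+I)\cdot Y\cdot A_k$, so that
\[
Y\cdot A_k = T'''\cdot Y\cdot (A_b'+A_b''), \qquad T''' = \bigl(T'\cdot (T'')^{-1}+I\bigr)^{-1}\cdot T',
\]
and $T'''$ is again a positive diagonal matrix. Since $A_b'$ and $A_b''$ are Kirchhoff matrices with nonnegative off-diagonal entries, $A_b'+A_b''$ is a Kirchhoff matrix whose off-diagonal support is exactly the union of the two supports. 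Thus the union of any two attainable supports is itself attainable by a single linearly conjugate network, and if $\mathcal{N}''$ contained a reaction absent from the dense $\mathcal{N}'$, the combined network would have strictly more reactions than $\mathcal{N}'$, a contradiction. This explicit construction is what replaces your "common refinement" heuristic; without it (or an equivalent argument handling the varying $\mathbf{c}$), the proposal has a genuine gap.
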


\begin{proof}
Assume $\mathcal{N}'$ and $\mathcal{N}''$ are linearly conjugate to $\mathcal{N}$, $\mathcal{N}'$ is dense in the space of networks which are linearly conjugate to $\mathcal{N}$, and $\mathcal{N}''$ contains a reaction not contained in $\mathcal{N}'$.

Since both $\mathcal{N}'$ and $\mathcal{N}''$ are linearly conjugate to $\mathcal{N}$, we have by (\ref{conjugate}) that
\[Y \cdot A_k = T' \cdot Y \cdot A_b'\]
and
\[Y \cdot A_k = T'' \cdot Y \cdot A_b''\]
where $T' = \mbox{diag} \left\{ \mathbf{c}' \right\}$, $\mathbf{c}' \in \mathbb{R}_{>0}^n$ and $A_b'$ correspond to $\mathcal{N}'$, $\mathbf{c}'' \in \mathbb{R}_{>0}^n$ and $T'' = \mbox{diag} \left\{ \mathbf{c}'' \right\}$ and $A_b''$ correspond to $\mathcal{N}''$.

Now consider $T' \cdot Y \cdot A_b''$. We have
\[T' \cdot Y \cdot A_b'' = T' \cdot (T'')^{-1} \cdot T'' \cdot Y \cdot A_b'' = Q \cdot Y \cdot A_k\]
where $Q = T' \cdot (T'')^{-1}$. Consequently, we have
\begin{equation}
\label{25}
T' \cdot Y \cdot A_b'' + T' \cdot Y \cdot A_b' = (Q + I) \cdot Y \cdot A_k.
\end{equation}
On the other hand, we have
\begin{equation}
\label{26}
T' \cdot Y \cdot A_b'' + T' \cdot Y \cdot A_b' = T' \cdot Y \cdot (A_b'' + A_b') = T' \cdot Y \cdot A_b'''
\end{equation}
where $A_b''' = A_b' + A_b''$. Combining (\ref{25}) and (\ref{26}) gives
\[Y \cdot A_k = (Q + I)^{-1} \cdot T' \cdot Y \cdot A_b''' = T''' \cdot Y \cdot A_b'''\]
where $T''' = (T' \cdot (T'')^{-1} + I)^{-1} \cdot T'$.

Since $T'$, $T''$ and $I$ are diagonal matrices with positive entries on the diagonal, so is $T'''$. This means that the network $\mathcal{N}'''$ corresponding to $A_b''' = A_b' + A_b''$ is linearly conjugate to $\mathcal{N}$. We can readily see, however, that $\mathcal{N}'''$ contains all of the reactions in both $\mathcal{N}'$ and $\mathcal{N}''$. If $\mathcal{N}''$ contains a reaction not contained in $\mathcal{N}'$ then $\mathcal{N}'''$ clearly has more reactions than $\mathcal{N}'$ which contradicts the assumption that $\mathcal{N}'$ is dense in the space of networks which are linearly conjugate to $\mathcal{N}$. The result follows.
\end{proof}

The following result follows immediately.

\begin{corollary}
\label{densecorollary}
Let $\mathcal{N}$ be a chemical reaction network. Then the structure of the unweighted directly graph of the dense reaction network $\mathcal{N}'$ which is linearly conjugate to $\mathcal{N}$ is unique.
\end{corollary}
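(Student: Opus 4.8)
The plan is to derive the corollary directly from Theorem~\ref{densestructure} by a symmetry argument. Suppose $\mathcal{N}'$ and $\mathcal{N}''$ are both dense reaction networks that are linearly conjugate to $\mathcal{N}$; I want to show their unweighted directed graphs coincide. First I would apply Theorem~\ref{densestructure} with $\mathcal{N}'$ in the role of the dense network and $\mathcal{N}''$ in the role of the arbitrary linearly conjugate network: this gives that the directed unweighted graph of $\mathcal{N}''$ is a subset of that of $\mathcal{N}'$. Then I would apply the same theorem again with the roles of $\mathcal{N}'$ and $\mathcal{N}''$ interchanged — which is legitimate precisely because $\mathcal{N}''$ is \emph{also} assumed dense — to conclude that the graph of $\mathcal{N}'$ is a subset of that of $\mathcal{N}''$.

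Combining the two inclusions yields that the unweighted directed graphs of $\mathcal{N}'$ and $\mathcal{N}''$ are equal, so the structure of the dense linearly conjugate network is independent of which dense realization one computes; this is the claimed uniqueness. I would note in passing that uniqueness here refers only to the combinatorial structure (the distribution of edges, equivalently the zero/nonzero pattern of $A_k'$), not to the rate constants or to the conjugacy transformation $T$, since different dense realizations may carry different weights and different diagonal scalings $\mathbf{c}$.

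There is essentially no obstacle: the only thing to be careful about is that Theorem~\ref{densestructure} is stated asymmetrically (one network dense, the other merely linearly conjugate), so the argument must explicitly invoke it twice with the hypotheses reassigned, rather than quoting a symmetric statement. Everything else is immediate, which is why the corollary "follows immediately" as remarked before its statement.
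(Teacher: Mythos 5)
Your argument is correct and is exactly the intended one: the paper's proof simply states that the corollary follows directly from Theorem~\ref{densestructure}, and your double application of that theorem (with the roles of the two dense networks interchanged) to obtain mutual inclusion of the unweighted directed graphs is the standard way to make that explicit. No gaps.
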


\begin{proof}
This follows directly from Theorem \ref{densestructure}.
\end{proof}

\section{Computational Extensions of Linearly Conjugate Networks}
\label{extensions}

In this section we will extend the optimization framework introduced in Section \ref{section1} to include complex balanced, reversible and detailed balanced networks, and to search for networks with the greatest and fewest number of complexes.

\subsection{Weakly Reversible Networks}
\label{wrsection}

Weakly reversible networks are a particular important class of reaction networks because strong properties are known about their dynamics and equilibrium concentrations. Under a supplemental condition on the rate constants, weakly reversible networks are known to have complex balanced equilibrium concentrations and therefore exhibit all of the dynamical properties normally reserved for these networks \cite{F1,H} (see Section \ref{cbsection} for further discussion of complex balanced networks).

Consequently, they are a primary candidate for the type of network we would like to find. The problem of determining if and when a chemical reaction network $\mathcal{N}$ is linearly conjugate to a weakly reversible network $\mathcal{N}'$ was first considered in \cite{Sz-H-T} and further refined in \cite{J-S4}. For convenience, we briefly recall the constraints published in \cite{J-S4} that guarantee the weak reversibility of the linearly conjugate network $\mathcal{N}'$:
\begin{equation}
\label{weakreversibility}
\mbox{\textbf{(WR)}} \; \; \left\{ \; \; \begin{array}{ll} & \displaystyle{\sum_{i=1,i \not= j}^m[\tilde{A}_k]_{ij} = \sum_{i=1,i \not= j}^m[\tilde{A}_k]_{ji}, \; \; \; j=1, \ldots, m}\\ & \displaystyle{[\tilde{A}_k]_{ij} \geq 0, \; \; \; i,j = 1, \ldots, m, \; i \not= j} \end{array} \right.
\end{equation}
where $\tilde{A}_k$ is an auxiliary Kirchhoff matrix with the same structure as $A_k'$ with appropriately scaled columns such that its kernel contains the $m$-dimensional vector of all ones. In order to guarantee that the matrix $\tilde{A}_k$ has the same structure as $A_k'$ and $A_b$ we also require that
\begin{equation}
\label{density2}
\mbox{\textbf{(WR-S)}} \; \; \left\{ \; \; \begin{array}{ll} & \displaystyle{0 \leq [\tilde{A}_k]_{ij}-\epsilon \delta_{ij}, \; \; \; i,j = 1, \ldots, m, \; \; i \not= j}\\ & \displaystyle{0 \leq -[\tilde{A}_k]_{ij}+u_{ij} \delta_{ij}, \; \; \; i,j = 1, \ldots, m, \; \; i \not= j}. \end{array} \right.
\end{equation}

\subsection{Reversible Networks}

In \cite{Sz-H}, an algorithm was presented which was capable of determining reversible reaction networks which are trivially linearly conjugate to a given reaction network. In this section, we present a simplified methodology and apply it to non-trivial linear conjugacies.

We recall that a network is reversible if $\mathcal{C}_i \to \mathcal{C}_j$ for any $\mathcal{C}_i, \mathcal{C}_j \in \mathcal{C}$ implies $\mathcal{C}_j \to \mathcal{C}_i$. For the network $\mathcal{N}'$, this is equivalent to the condition
\[[A_k']_{ij} > \epsilon \; \Longleftrightarrow \; [A_k']_{ji} > \epsilon\]
for some sufficient small $0 < \epsilon \ll 1$. This is in turn equivalent to
\[\delta_{ij} = 1 \; \Longleftrightarrow \; \delta_{ji} = 1\]
where $\delta_{ij} \in \left\{ 0, 1 \right\}$, $i, j = 1, \ldots, m,$ $i \not= j$, as in Section \ref{section1}. It follows that we can restrict our search space to reversible networks with the constraint set
\begin{equation}
\label{reversible}
\mbox{\textbf{(Rev)}} \; \; \left\{ \; \; \begin{array}{l} \delta_{ij} - \delta_{ji} = 0, \\ i,j = 1, \ldots, m, \; \; i < j. \end{array} \right.
\end{equation}
A sparse or dense reversible network which is linearly conjugate to $\mathcal{N}$ can be found by optimizing (\ref{sparse}) or (\ref{dense}), respectively, over the constraint sets (\ref{conjugate}), (\ref{density}) and (\ref{reversible}).

\subsection{Complex Balanced Systems}
\label{cbsection}

A particularly important class of chemical reaction networks are the complex balanced networks introduced in \cite{H-J1}.
\begin{definition}
An equilibrium concentration $\mathbf{x}^* \in \mathbb{R}_{>0}^n$ of the chemical reaction network $\mathcal{N}$ is a \textbf{complex balanced equilibrium concentration} if
\begin{equation}
\label{cb}
A_k \cdot \Psi(\mathbf{x}^*) = \mathbf{0}.
\end{equation}
The network $\mathcal{N}$ is called \textbf{complex balanced} if every equilibrium concentration $\mathbf{x}^* \in \mathbb{R}_{>0}^n$ is a complex balanced equilibrium concentration.
\end{definition}

Many strong properties are known about complex balanced networks. In particular, it is known that complex balanced networks permit exactly one positive equilibrium concentration in each invariant space of the network and that this equilibrium concentration is locally asymptotically stable (Lemma 4C and Theorem 6A, \cite{H-J1}). Complex balanced systems are also known to be weakly reversible so that they are a subset of the weakly reversible networks considered in Section \ref{wrsection} (Theorem 2B, \cite{H}).

The following result shows complex balancing is a system property depending on the structure and parameters of the network and not on the chosen equilibrium concentration.
\begin{theorem}[Theorem 6A, \cite{H-J1}]
\label{hornjackson}
If a chemical reaction network $\mathcal{N}$ is complex balanced at an equilibrium concentration $\mathbf{x}^* \in \mathbb{R}_{>0}^n$ then it is complex balanced at all of its equilibrium concentrations.
\end{theorem}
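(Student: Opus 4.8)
The plan is to reconstruct the classical argument of Horn and Jackson, which hinges on the elementary inequality $\ln \sigma \le \sigma - 1$ for $\sigma > 0$, with equality if and only if $\sigma = 1$. Fix a complex balanced equilibrium $\mathbf{x}^* \in \mathbb{R}^n_{>0}$, so that $A_k \cdot \Psi(\mathbf{x}^*) = \mathbf{0}$, and let $\mathbf{x}^{**} \in \mathbb{R}^n_{>0}$ be any other equilibrium, i.e. $Y \cdot A_k \cdot \Psi(\mathbf{x}^{**}) = \mathbf{0}$. Writing $\mathbf{a} = \Psi(\mathbf{x}^*)$, $\mathbf{b} = \Psi(\mathbf{x}^{**})$, and $u_j = \ln(x_j^{**}/x_j^*)$, the product form (\ref{psi}) gives $\ln b_i - \ln a_i = \sum_{j} [Y]_{ji}\, u_j = (Y^T \mathbf{u})_i$ for each complex $C_i$; this is the one place where the mass-action structure of $\Psi$ enters.

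First I would compute the scalar $\langle Y^T \mathbf{u},\, A_k \mathbf{b}\rangle = \langle \mathbf{u},\, Y A_k \mathbf{b}\rangle = 0$, using that $\mathbf{x}^{**}$ is an equilibrium. Expanding $(A_k \mathbf{b})_i$ via (\ref{kinetics}) and relabelling the summation indices in the off-diagonal terms, this identity rearranges to $\sum_{C_j \to C_i} k(j,i)\, a_j \phi_j \left( \ln \phi_i - \ln \phi_j\right) = 0$, where $\phi_i = b_i/a_i > 0$ and the sum runs over the reactions of $\mathcal{N}$. Next, apply $\ln \phi_i - \ln \phi_j = \ln(\phi_i/\phi_j) \le \phi_i/\phi_j - 1$ to each summand; since $k(j,i)\, a_j \phi_j \ge 0$ the resulting upper bound is $\sum_{C_j\to C_i} k(j,i)\, a_j (\phi_i - \phi_j)$, and the complex balance relation $A_k \mathbf{a} = \mathbf{0}$ — which says $\sum_{C_j\to C_i} k(j,i)\, a_j = a_i \sum_{C_i \to C_l} k(i,l)$ for every $i$ — forces this last sum to vanish after the same relabelling. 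Hence the sum, which on the left equals $0$ and is bounded above by $0$, is squeezed, so the inequality must hold with equality in every term with $k(j,i) > 0$; thus $\phi_i/\phi_j = 1$, i.e. $\phi_i = \phi_j$, whenever $C_j \to C_i \in \mathcal{R}$.

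It follows that $\phi$ is constant on each linkage class, say $\phi_i = \lambda_q$ for all $C_i$ in the $q$-th linkage class $\Lambda_q$ (equality $\phi_i = \phi_j$ propagates along reactions of either direction, hence across the entire undirected linkage class). Since $A_k$ is block diagonal with respect to the partition of complexes into linkage classes, the relation $\mathbf{b}|_{\Lambda_q} = \lambda_q\, \mathbf{a}|_{\Lambda_q}$ yields $(A_k \mathbf{b})|_{\Lambda_q} = \lambda_q\, (A_k \mathbf{a})|_{\Lambda_q} = \mathbf{0}$ for each $q$, whence $A_k \cdot \Psi(\mathbf{x}^{**}) = A_k \mathbf{b} = \mathbf{0}$. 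Thus $\mathbf{x}^{**}$ is a complex balanced equilibrium, which proves the theorem.

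The main obstacle is bookkeeping rather than conceptual: correctly turning the inner-product identity $\langle \mathbf{u}, Y A_k \mathbf{b}\rangle = 0$ into the single signed sum over reactions (this requires careful index relabelling in the off-diagonal terms of $A_k$), and then invoking complex balance at $\mathbf{x}^*$ in precisely the form needed to annihilate the dominating sum $\sum k(j,i)\, a_j(\phi_i - \phi_j)$. One should also record explicitly that the block-diagonal structure of $A_k$ with respect to linkage classes is what allows the per-class identities $(A_k\mathbf{b})|_{\Lambda_q} = \mathbf{0}$ to be assembled into $A_k\mathbf{b} = \mathbf{0}$.
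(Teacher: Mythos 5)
The paper does not actually prove this statement---it is imported verbatim as Theorem 6A of Horn and Jackson \cite{H-J1}---so there is no in-paper argument to compare against; your reconstruction is the classical Horn--Jackson proof and it is correct. The three ingredients are all in place and correctly assembled: the orthogonality $\langle \mathbf{u},\, Y A_k \Psi(\mathbf{x}^{**})\rangle = 0$ coming from $\mathbf{x}^{**}$ being an equilibrium, the inequality $\ln\sigma \le \sigma - 1$ with equality only at $\sigma=1$ applied termwise to the resulting sum over reactions, and the complex-balance relation at $\mathbf{x}^*$ used to show the dominating sum $\sum k(j,i)\,a_j(\phi_i-\phi_j)$ also vanishes, so the squeeze forces $\phi_i=\phi_j$ across every reaction; the block-diagonal structure of $A_k$ over linkage classes then correctly yields $A_k\Psi(\mathbf{x}^{**})=\mathbf{0}$.
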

\noindent It should be noted that the complex balancing of a network is still dependent on the choice of rate constants. It is possible for a reaction network to be complex balanced for some choices of rate constants and not for others.

In \cite{Sz-H}, an algorithm was presented which was capable of determining sparse and dense complex balanced networks which are trivially linearly conjugate to a given network $\mathcal{N}$. This method required determining an equilibrium value $\mathbf{x}^* \in \mathbb{R}_{>0}^n$ of the network $\mathcal{N}$ and then imposing the condition
\[A_k' \cdot \Psi(\mathbf{x}^*) = \mathbf{0}\]
on $\mathcal{N}'$ in accordance with (\ref{cb}). In this section, we extend these results to include non-trivial linearly conjugate networks.

Suppose that $\mathcal{N}$ and $\mathcal{N}'$ are linearly conjugate via the transformation $\mathbf{y} = T^{-1} \mathbf{x}$. In order to guarantee the network $\mathcal{N}'$ is complex balanced, according to (\ref{cb}) we require that
\begin{equation}
\label{234}
A_k' \cdot \Psi(\mathbf{y}^*) = \mathbf{0}.
\end{equation}
Since the equilibrium concentrations of $\mathcal{N}$ and $\mathcal{N}'$ are related by the transformation $\mathbf{y}^* = T^{-1} \mathbf{x}^*$, we have that the left-hand side of (\ref{234}) can be rewritten
\[A_k' \cdot \Psi(\mathbf{y}^*) = A_k' \cdot \Psi(T^{-1} \mathbf{x}^*) = A_k' \cdot \mbox{diag} \left\{ \Psi(\mathbf{c}) \right\}^{-1} \cdot \Psi(\mathbf{x}^*) = A_b \cdot \Psi(\mathbf{x}^*)\]
where we have made use of the form of the kinetics matrix of $\mathcal{N}'$ according to (\ref{newrateconstants}). The condition for complex balancing of the linearly conjugate network $\mathcal{N}'$ is therefore
\begin{equation}
\label{cb2}
\mbox{\textbf{(CB)}} \; \; \left\{ \; \; \begin{array}{l} A_b \cdot \Psi(\mathbf{x}^*) = \mathbf{0} \\ M \cdot \Psi(\mathbf{x}^*) = \mathbf{0} \\ \mathbf{x}^* \in \mathbb{R}_{>0}^n \end{array} \right.
\end{equation}
where $M$ is as in (\ref{M}). A sparse or dense complex balanced network which is linearly conjugate to $\mathcal{N}$ can be found by optimizing (\ref{sparse}) or (\ref{dense}), respectively, over the constraint sets (\ref{conjugate}), (\ref{density}) and (\ref{cb2}).

It should be noted that the optimization algorithm is less computationally exhausting than the corresponding algorithm for weak reversibility (\ref{weakreversibility}). This is because the matrix $\tilde{A}_k$ required in the general weakly reversible case is not required in the complex balancing condition; rather, it is sufficient to use the matrix $A_b$. Consequently, there are fewer decision variables in the complex balancing algorithm. The pre-step that a $\mathbf{x}^* \in \mathbb{R}_{>0}^n$ be found satisfying $Y \cdot A_k \cdot \Psi(\mathbf{x}^*) = \mathbf{0}$ may off-set this advantage, however, depending on the difficulty in solving $Y \cdot A_k \cdot \Psi(\mathbf{x}^*) = \mathbf{0}$.

It is unclear how the outcome of the algorithm depends on the choice of equilibrium concentration $\mathbf{x}^* \in \mathbb{R}_{>0}^n$, which in general is not unique. The following result clarifies this dependence.

\begin{theorem}
\label{cbtheorem}
Suppose $\mathcal{N}$ is linearly conjugate to $\mathcal{N}'$ with transformation matrix $T = \mbox{diag} \left\{ \mathbf{c} \right\}$ where $\mathbf{c} \in \mathbb{R}_{>0}^n$ and suppose $\mathcal{N}'$ is complex balanced at $\mathbf{y}^* = T^{-1} \mathbf{x}^*$ where $\mathbf{x}^* \in \mathbb{R}_{>0}^n$ and $Y \cdot A_k \cdot \Psi(\mathbf{x}^*) = \mathbf{0}$. Then $\mathcal{N}'$ is complex balanced at $\bar{\mathbf{y}}^* = T^{-1} \bar{\mathbf{x}}^*$ for all $\bar{\mathbf{x}}^* \in \mathbb{R}_{>0}^n$ satisfying $Y \cdot A_k \cdot \Psi(\bar{\mathbf{x}}^*) = \mathbf{0}$.
\end{theorem}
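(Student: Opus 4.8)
The plan is to reduce the claim to the Horn--Jackson theorem (Theorem~\ref{hornjackson}), applied not to $\mathcal{N}'$ directly but to an auxiliary mass-action network built from the matrix $A_b$ occurring in the linear conjugacy constraints \textbf{(LC)}.

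First I would observe that the constraint set \textbf{(LC)} forces $A_b$ to have precisely the sign pattern of a Kirchhoff matrix as in (\ref{kinetics}): zero column sums, nonnegative off-diagonal entries, and nonpositive diagonal entries. Hence $A_b$ is the Kirchhoff matrix of a bona fide mass-action reaction network $\mathcal{N}_b = (\mathcal{S}, \mathcal{C}, \mathcal{R}_b)$ on the same complex set $\mathcal{C}$ as $\mathcal{N}$ and $\mathcal{N}'$; indeed, by (\ref{newrateconstants}), $\mathcal{N}_b$ is just $\mathcal{N}'$ with the rate constants rescaled, so it has the same reaction graph as $\mathcal{N}'$. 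Next I would show that $\mathcal{N}$ and $\mathcal{N}_b$ have exactly the same set of positive equilibria: from the first line of \textbf{(LC)} together with (\ref{M}) we have $Y \cdot A_b = T^{-1} \cdot M = T^{-1} \cdot Y \cdot A_k$, and since $T^{-1}$ is invertible, $Y \cdot A_b \cdot \Psi(\mathbf{x}) = \mathbf{0}$ if and only if $Y \cdot A_k \cdot \Psi(\mathbf{x}) = \mathbf{0}$ for every $\mathbf{x} \in \mathbb{R}_{>0}^n$. In particular both $\mathbf{x}^*$ and $\bar{\mathbf{x}}^*$ from the hypothesis are positive equilibria of $\mathcal{N}_b$.

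Now I would translate the complex balancing hypothesis. Using the computation already carried out in the excerpt (the one leading to condition \textbf{(CB)}), $A_k' \cdot \Psi(T^{-1}\mathbf{z}) = A_b \cdot \Psi(\mathbf{z})$ for any $\mathbf{z} \in \mathbb{R}_{>0}^n$, because $\Psi(T^{-1}\mathbf{z}) = \mbox{diag}\{\Psi(\mathbf{c})\}^{-1}\Psi(\mathbf{z})$ and $A_k' = A_b \cdot \mbox{diag}\{\Psi(\mathbf{c})\}$. Taking $\mathbf{z} = \mathbf{x}^*$, the hypothesis that $\mathcal{N}'$ is complex balanced at $\mathbf{y}^* = T^{-1}\mathbf{x}^*$, i.e.\ $A_k' \cdot \Psi(\mathbf{y}^*) = \mathbf{0}$, is equivalent to $A_b \cdot \Psi(\mathbf{x}^*) = \mathbf{0}$, that is, to $\mathcal{N}_b$ being complex balanced at its equilibrium $\mathbf{x}^*$. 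Applying Theorem~\ref{hornjackson} to $\mathcal{N}_b$, it is then complex balanced at all of its positive equilibria, hence at $\bar{\mathbf{x}}^*$; thus $A_b \cdot \Psi(\bar{\mathbf{x}}^*) = \mathbf{0}$. Taking $\mathbf{z} = \bar{\mathbf{x}}^*$ in the displayed identity reverses this: $A_k' \cdot \Psi(\bar{\mathbf{y}}^*) = A_k' \cdot \Psi(T^{-1}\bar{\mathbf{x}}^*) = A_b \cdot \Psi(\bar{\mathbf{x}}^*) = \mathbf{0}$, which is exactly the assertion that $\mathcal{N}'$ is complex balanced at $\bar{\mathbf{y}}^*$.

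There is no genuinely hard step here; the only thing requiring care is the bookkeeping that makes Theorem~\ref{hornjackson} applicable --- verifying that $A_b$ really is a Kirchhoff matrix and that conjugation by the positive diagonal matrix $T$ identifies both the positive equilibrium set and the complex balancing condition of $\mathcal{N}'$ with those of $\mathcal{N}_b$. An alternative route would bypass $\mathcal{N}_b$ altogether: one notes that a linear conjugacy carries positive equilibria of $\mathcal{N}$ to positive equilibria of $\mathcal{N}'$ (so both $T^{-1}\mathbf{x}^*$ and $T^{-1}\bar{\mathbf{x}}^*$ are equilibria of $\mathcal{N}'$) and applies Theorem~\ref{hornjackson} to $\mathcal{N}'$ directly. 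I would mention this as a remark, but prefer the matrix argument above since every ingredient it uses has already been established in the excerpt.
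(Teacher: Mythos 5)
Your proposal is correct, and the ``alternative route'' you mention at the end is exactly the paper's proof: it shows that $\bar{\mathbf{y}}^* = T^{-1}\bar{\mathbf{x}}^*$ is a positive equilibrium of $\mathcal{N}'$ and then applies Theorem~\ref{hornjackson} to $\mathcal{N}'$ directly. Your preferred packaging via the auxiliary network $\mathcal{N}_b$ with Kirchhoff matrix $A_b$ is the same argument transported by the positive diagonal change of variables (which identifies the equilibrium sets and the complex-balancing conditions), so the two routes are essentially identical.
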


\begin{proof}

Suppose trajectories of $\mathcal{N}'$ are related to trajectories of $\mathcal{N}$ by the relationship $\mathbf{y} = T^{-1} \mathbf{x}$ where $T = \mbox{diag} \left\{ \mathbf{c} \right\}$ and $\mathbf{c} \in \mathbb{R}_{>0}^n$.

Suppose that $\mathcal{N}'$ is complex balanced at $\mathbf{y}^* = T^{-1} \mathbf{x}^*$ where $\mathbf{x}^* \in \mathbb{R}_{>0}^n$ is an equilibrium concentration of $\mathcal{N}$. It follows from Theorem \ref{hornjackson} that
\begin{equation}
\label{8348}
Y \cdot A_k' \cdot \Psi(\mathbf{y}) = \mathbf{0} \; \; \; \Longrightarrow \; \; \; A_k' \cdot \Psi(\mathbf{y}) = \mathbf{0}.
\end{equation}

Now consider an arbitrary equilibrium concentration $\bar{\mathbf{x}}^* \in \mathbb{R}_{>0}^n$ of $\mathcal{N}$. Since $\mathcal{N}$ and $\mathcal{N}'$ are linearly conjugate, it follows that $Y \cdot A_k = T \cdot Y \cdot A_k' \cdot \mbox{diag} \left\{ \Psi(\mathbf{c}) \right\}^{-1}$. It follows that we have
\[\begin{split} \mathbf{0} = Y \cdot A_k \cdot \Psi(\bar{\mathbf{x}}^*) & = T \cdot Y \cdot A_k' \cdot \mbox{diag} \left\{ \Psi(\mathbf{c}) \right\}^{-1} \cdot \Psi(T \bar{\mathbf{y}}^*) \\ & = T \cdot Y \cdot A_k' \cdot \Psi(\bar{\mathbf{y}}^*).\end{split}\]
It follows by the structure of $T$ that $Y \cdot A_k' \cdot \Psi(\bar{\mathbf{y}}^*) = \mathbf{0}$. From (\ref{8348}) we have that $A_k' \cdot \Psi(\bar{\mathbf{y}}^*) = \mathbf{0}$. In other words, $\mathcal{N}'$ is complex balanced at $\bar{\mathbf{y}}^*$ and we are done.

\end{proof}

This result shows that when imposing the complex balancing constraint (\ref{cb2}) on $\mathcal{N}'$, it does not matter which equilibrium concentration of $\mathcal{N}$ we choose. The feasible set of solutions (i.e. admissible networks) is the same.

\subsection{Detailed Balanced Systems}

In \cite{Sz-H}, the authors present an algorithm for determining detailed balanced networks which are trivially linearly conjugate to a given network. In this section we extend this algorithm to non-trivial linear conjugacies.

\begin{definition}
An equilibrium concentration $\mathbf{x}^* \in \mathbb{R}_{>0}^n$ of the chemical reaction network $\mathcal{N}$ is a \textbf{detailed balanced} equilibrium concentration if
\begin{equation}
\label{db}
[A_k]_{ij} \Psi_j(\mathbf{x}^*) = [A_k]_{ji} \Psi_i(\mathbf{x}^*), \; \; \; \forall \; i,j = 1, \ldots, m, \; \; i \not= j.
\end{equation}
The network $\mathcal{N}$ is called \textbf{detailed balanced} if every equilibrium concentration $\mathbf{x}^* \in \mathbb{R}_{>0}^n$ is a complex balanced equilibrium concentration.
\end{definition}
\noindent In other words, an equilibrium concentration $\mathbf{x}^* \in \mathbb{R}_{>0}^n$ is detailed balanced if the flow across each reaction is balanced by the flow across an opposite reaction at $\mathbf{x}^*$.

Suppose that $\mathcal{N}$ and $\mathcal{N}'$ are linearly conjugate via the transformation $\mathbf{y} = T^{-1} \mathbf{x}$. In order to guarantee the network $\mathcal{N}'$ is detailed balanced, according to (\ref{db}) we require that
\begin{equation}
\label{235}
\mbox{diag} \left\{ \Psi(\mathbf{y}^*) \right\} \cdot (A_k')^T = A_k' \cdot \mbox{diag} \left\{ \Psi(\mathbf{y}^*) \right\}.
\end{equation}
Since the equilibrium concentrations of $\mathcal{N}$ and $\mathcal{N}'$ are related by the transformation $\mathbf{y}^* = T^{-1} \mathbf{x}^*$, we have that
\[\begin{split} A_k' \cdot \mbox{diag} \left\{ \Psi(\mathbf{y}^*) \right\} & = A_k' \cdot \mbox{diag} \left\{ \Psi(\mathbf{c}) \right\}^{-1} \cdot \mbox{diag} \left\{ \Psi(\mathbf{x}^*) \right\} \\ & = A_b \cdot \mbox{diag} \left\{ \Psi(\mathbf{x}^*) \right\} \end{split}\]
where we have made use of the form of the kinetics matrix of $\mathcal{N}'$ according to (\ref{newrateconstants}). The condition for detailed balancing of the linearly conjugate network $\mathcal{N}'$ is therefore
\begin{equation}
\label{db2}
\mbox{\textbf{(DB)}} \; \; \left\{ \; \; \begin{array}{l} \mbox{diag} \left\{ \Psi(\mathbf{x}^*) \right\} \cdot A_b^T = A_b \cdot \mbox{diag} \left\{ \Psi(\mathbf{x}^*) \right\} \\ M \cdot \Psi(\mathbf{x}^*) = \mathbf{0} \\ \mathbf{x}^* \in \mathbb{R}_{>0}^n \end{array} \right.
\end{equation}
where $M$ is as in (\ref{M}). A sparse or dense detailed balanced network which is linearly conjugate to $\mathcal{N}$ can be found by optimizing (\ref{sparse}) or (\ref{dense}), respectively, over the constraint sets (\ref{conjugate}), (\ref{density}) and (\ref{db2}). Since the analogous result to Theorem \ref{cbtheorem} holds as a consequence of detailed balanced systems being a subset of complex balanced networks, we do not prove it here.

\subsection{Minimal and Maximal Number of Complexes}

We can also adapt to non-trivial linear conjugacies the algorithm introduced in \cite{Sz-H} for determining a network with the fewest or greatest number of complexes from a fixed complex set which is trivially linearly conjugate to a given network $\mathcal{N}$.

In order to count the number of complexes in the network, we introduce the binary variables $\delta_i \in \left\{ 0, 1 \right\}$, $i, j = 1, \ldots, m$, and consider the logical equations
\begin{equation}
\label{888}
\delta_i = 1 \; \Longleftrightarrow \; \mathop{\sum_{j_1=1}^{m}}_{j_1 \not= i}[A_k]_{ij_1} + \mathop{\sum_{j_2=1}^{m}}_{j_2 \not= i}[A_k]_{j_2i} > 0
\end{equation}
for $i=1, \ldots, m$. In other words, $\delta_i$ takes on the value of one if and only if there is a reaction to or from the complex $\mathcal{C}_i$ in the network; otherwise, it takes the value zero. For computational purposes, we reconsider (\ref{888}) as 
\begin{equation}
\label{889}
\delta_i = 1 \; \Longleftrightarrow \; \mathop{\sum_{j_1=1}^{m}}_{j_1 \not= i}[A_k]_{ij_1} + \mathop{\sum_{j_2=1}^{m}}_{j_2 \not= i}[A_k]_{j_2i} \geq \epsilon
\end{equation}
where $0 < \epsilon \ll 1$. The linear constraints required to substantiate (\ref{889}) are
\begin{equation}
\label{complexes}
\mbox{\textbf{(Comp)}} \; \; \left\{ \; \; \begin{array}{l} \displaystyle{0 \leq \mathop{\sum_{j_1=1}^{m}}_{j_1 \not= i}[A_k]_{ij_1} + \mathop{\sum_{j_2=1}^{m}}_{j_2 \not= i}[A_k]_{j_2i} - \delta_i \epsilon} \\ \displaystyle{0 \leq -\mathop{\sum_{j_1=1}^{m}}_{j_1 \not= i}[A_k]_{ij_1} - \mathop{\sum_{j_2=1}^{m}}_{j_2 \not= i}[A_k]_{j_2i}} \\ \; \; \; \; \; \; \; \; \; \; \; \; \; \; \; \; \; \; \; \; \; \displaystyle{+ \left( \mathop{\sum_{j_1=1}^{m}}_{j_1 \not= i} u_{ij_1} + \mathop{\sum_{j_2=1}^{m}}_{j_2 \not= i} u_{j_2i} \right) \delta_i} \\ \delta_i \in \left\{ 0, 1 \right\}, i = 1, \ldots, m. \end{array} \right.
\end{equation}
We can now determine a network with the fewest or greatest number of complexes by optimizing the functions
\begin{equation}
\label{fewest}
\mbox{\textbf{(Min)}} \; \; \left\{ \; \; \; \; \; \; \; \; \mbox{minimize} \; \; \; \; \; \sum_{i=1}^m \delta_i \right.
\end{equation}
or
\begin{equation}
\label{greatest}
\mbox{\textbf{(Max)}} \; \; \left\{ \; \; \; \; \; \; \; \; \mbox{minimize} \; \; \; \; \;-\sum_{i=1}^m \delta_i \right.
\end{equation}
respectively over the constraint sets (\ref{conjugate}) and (\ref{complexes}). Further constraints can be imposed to restrict ourselves to specific classes of systems (e.g. complex balanced systems, reversible networks, etc.) although care has to be taken to ensure the structural constraints are still satisfied.

\subsection{Examples}

In this section, we present a few examples which illustrate the methodologies outlined so far.\\

\noindent \textbf{Example 1:} Consider the kinetic scheme
\begin{equation}
\label{example1}
\begin{split} \dot{x}_1 & = x_1 x_2^2 - 2 x_1^2 + x_1 x_3^2 \\ \dot{x}_2 & = -x_1^2 x_2^2 + x_1 x_3^2 \\ \dot{x}_3 & = x_1^2 - 3 x_1 x_3^2\end{split}
\end{equation}
introduced in \cite{J-S4}. In that paper, it was shown that the kinetics could be generated by a reaction network involving the complex set
\[\begin{split} & C_1 = X_1 + 2 X_2, C_2 = 2X_1 + 2 X_2, C_3 = 2X_1 + X_2, \\ & C_4 = 2X_1, C_5 = X_1, C_6 = 2X_1 + X_3, C_7 = X_1 + 2X_3 \\ & C_8 = 2X_1 + 2X_3, C_9 = X_1 + X_2 + 2X_3, C_{10} = X_1 + X_3\end{split}\]
and that (\ref{example1}) has dynamics which is linearly conjugate to those generated by the sparse weakly reversible network given in Figure \ref{figure1}(a) (conjugacy constants $c_1 = 20, c_2 = 2, c_3 = 5$) and the dense weakly reversible network given in Figure \ref{figure1}(b) (conjugacy constants $c_1 = 20/3, c_2 = 20/33, c_3=5/3$).

\begin{figure}[h]
\begin{center}
\includegraphics[width=11cm]{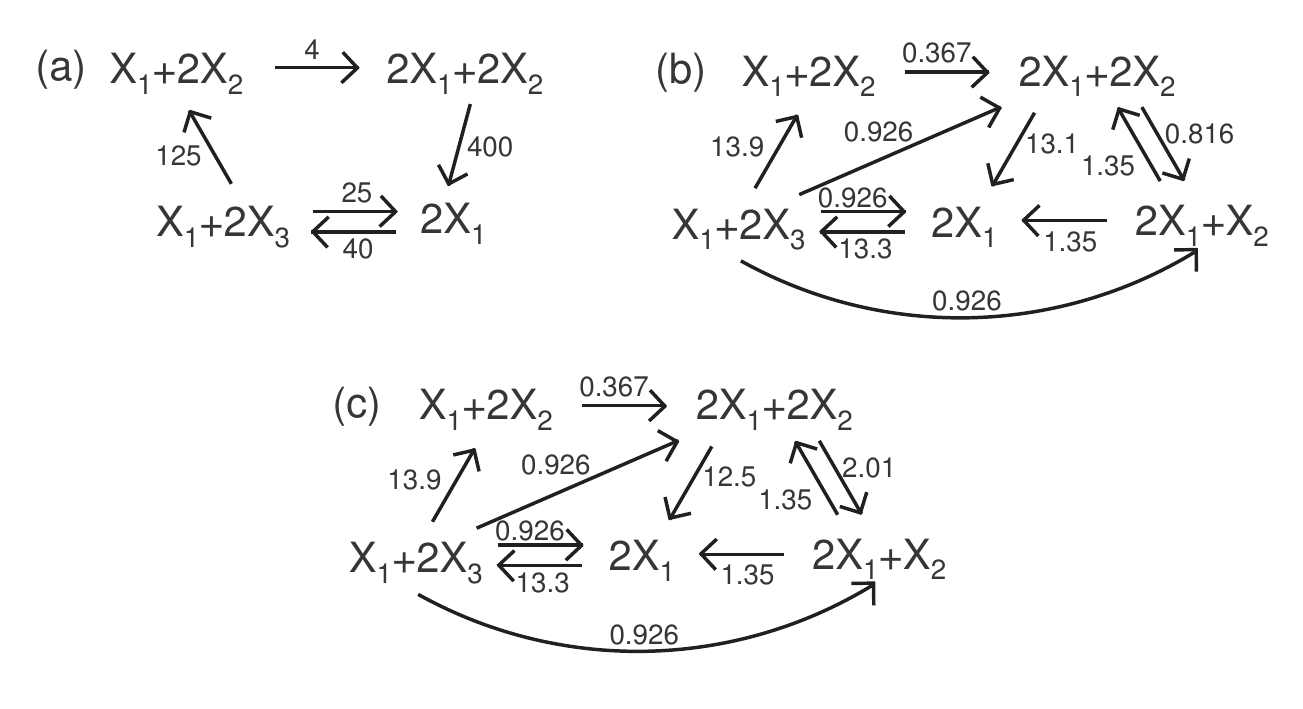}
\end{center}
\vspace{-0.3in}
\caption{Weakly reversible networks which are linearly conjugate to a network with the kinetics (\ref{example1}). The network in (a) is sparse while the networks in (b) and (c) are dense. The networks (a) and (c) are also complex balanced. (The parameter values in (b) and (c) have been rounded to three significant figures.)}
\label{figure1}
\end{figure}

The network in Figure \ref{figure1}(a) is complex balanced as a consequence of it being a zero deficiency weakly reversible network. It can be easily checked, however, that the network in Figure \ref{figure1}(b) is not complex balanced. We might wonder, therefore, what running the algorithm for a dense complex balanced network which is linearly conjugate to a network generating the kinetics (\ref{example1}) would produce.

Numerically, we can determine that an equilibrium concentration of (\ref{example1}) is $(x_1^*,x_2^*,x_3^*) = (0.2,0.577350269,0.258198889)$. Running GLPK for a sparse network (\ref{sparse}) over the constraints (\ref{conjugate}), (\ref{density}) and (\ref{cb2}) gives the network given in Figure \ref{figure1}(c) (conjugacy constants $c_1 = 20/3, c_2 = 20/33, c_3=5/3$). We notice that this network has the same structure as the weakly reversible network in Figure \ref{figure1}(b) and, furthermore, only differs in two rate constant values. It can be checked, however, that (c) is complex balanced while (b) is not.\\

\noindent \textbf{Example 2:} Consider the kinetic scheme
\begin{equation}
\label{example2}
\begin{array}{ll} \dot{x}_1 = -2x_1x_2+2x_3+2x_6 \hspace{0.5in} & \dot{x}_4 = x_3-x_4x_5+x_6 \\ \dot{x}_2 = -x_1x_2+2x_3 & \dot{x}_5 = -2x_4x_5+4x_6 \\ \dot{x}_3 = 2x_1x_2-4x_3 & \dot{x}_6 = x_4x_5 - 2x_6.\end{array} 
\end{equation}
Using the indexing scheme introduced in \cite{H-T} and more recently applied in the papers \cite{Sz-H} and \cite{J-S4} we can construct a chemical reaction network capable of generating the dynamics (\ref{example2}) under the assumption of mass-action kinetics (\ref{de}) which involves the complexes
\[\begin{split}
& C_1 = X_1 + X_2, C_2 = X_2, C_3 = X_3, C_4 = X_1+X_3,\\ & C_5 = X_6, C_6 = X_1 + X_6, C_7 = X_1, C_8 = X_2+X_3,\\ & C_9 = X_1+X_2+X_3, C_{10} = \emptyset, C_{11} = X_3 + X_4 \\ & C_{12} = X_4 + X_5, C_{13} = X_5, C_{14} = X_6, C_{15} = X_4 + X_6,\\ & C_{16} = X_4, C_{17} = X_5 + X_6, C_{18} = X_4 + X_5 + X_6.
\end{split}\]

It seems less than desirable, however, to consider a network of 18 complexes given the simplistic dynamics (\ref{example2}). We might wonder if there is a network with fewer complexes. Optimizing (\ref{fewest}) over the constraint sets (\ref{conjugate}) and (\ref{complexes}) gives the network
\[\mathcal{N}': \; \; \; \; \; \; \; \; \begin{array}{l} \displaystyle{X_2 + X_3 \; \stackrel{1}{\longleftarrow} \; X_1 + X_2 \; \; \mathop{\stackrel{1}{\rightleftarrows}}_{2} \; \; X_3 \; \stackrel{2}{\longrightarrow} \; X_4} \\ \displaystyle{X_5 \; \stackrel{2}{\longleftarrow} \; X_4+X_5 \; \; \mathop{\stackrel{2}{\rightleftarrows}}_{2} \; \; X_6 \; \stackrel{2}{\longrightarrow} \; X_1 + X_6} \end{array}\]
and the conjugacy constants $c_1 = 1, c_2 = 2, c_3 = 2, c_4 = 1, c_5 = 4, c_6 = 2$.

In terms of understanding the qualitative dynamics of \ref{example2}), this network is not particularly insightful. We might notice, however, that the net effect of all the reaction pathways leading out from $X_3$ is to create an $X_2$ and an $X_4$ at the expense of depleting $X_3$. The complex $X_2 + X_4$, however, has not been considered in the procedure. Similarly, the reaction pathways leading out from $X_6$ generate an $X_1$ and an $X_5$ but the complex $X_1 + X_5$ has not been included in the procedure. We might consider appending the procedure, therefore, to include $C_{19} = X_2 + X_4$ and $C_{20} = X_1 + X_5$. Repeating the algorithm in GLPK gives the network
\[\mathcal{N}'': \; \; \; \; \; \; \; \; \begin{array}{l} \displaystyle{X_1 + X_2 \; \; \mathop{\stackrel{1}{\rightleftarrows}}_{2} \; \; X_3 \; \stackrel{2}{\longrightarrow} \; X_2 + X_4} \\ \displaystyle{X_4 + X_5 \; \; \mathop{\stackrel{2}{\rightleftarrows}}_{1} \; \; X_6 \; \stackrel{1}{\longrightarrow} \; X_1 + X_5} \end{array}\]
and the conjugacy constants $c_1 = 1, c_2 = 1, c_3 = 2, c_4 = 1, c_5 = 2, c_6 = 1$. This is easily identified as the enzyme network
\[\begin{array}{l} \displaystyle{S_1 + E \; \; \rightleftarrows \; \; SE \; \longrightarrow \; S_2 + E} \\ \displaystyle{S_2 + F \; \; \rightleftarrows \; \; PF \; \longrightarrow \; S_1 + F}\end{array}\]
where an enzyme $E$ facilitates the transfer of a substrate $S_1$ into another substract $S_2$ and another enzyme $F$ facilitates the transfer back. This network was considered extensively in \cite{A3} and \cite{A-S2}. In particular, it was shown in \cite{A-S2} that for all rate constant values, the network possesses within each invariant space a unique positive equilibrium concentration which is globally asymptotically stable relative to that invariant space. It follows by the properties of linearly conjugate reaction networks that (\ref{example2}) inherits the same qualitative dynamics.

\section{Structural Dynamical Equivalence}

In this section we extend the computation procedure given in Section \ref{extensions} for dynamical equivalence to the case of networks $\mathcal{N}$ which are structurally fixed but have undetermined parameters.

\subsection{Dynamical Equivalence}

The MILP framework outlined so far requires that the rate constants be specified for the network $\mathcal{N}$. Consequently, when we search for networks which are linearly conjugate to a given network $\mathcal{N}$, we are really asking if there are networks which are linearly conjugate \emph{for a given choice of parameter values}.

For networks where the dynamical behaviour is heavily dependent on the chosen rate constants, however, it is possible that certain behaviours are being overlooked by poor rate constant selection. There are networks, for instance, which are known to be linearly conjugate to weakly reversible networks or complex balanced networks for certain values of the rate constants but not for others (see Examples 2 and 3 of \cite{J-S2}). In such cases, if the rate constants are not carefully chosen, the algorithm may overlook these networks and we would not realize that the mechanism shares characteristics with these other networks.


Therefore, we now change the problem setup by fixing only the structure of the initial network $\mathcal{N}$ but not the parameter values. We will show that this problem class can also be casted to the framework of MILP. This would remove the above mentioned limits of using a fully specified initial network model.

The conditions for dynamical equivalence, keeping the entries of both $A_k$ and $A_k'$ general, are
\begin{equation}
\label{realization}
\mbox{\textbf{(DE)}} \; \; \left\{ \; \; \begin{array}{ll} & Y \cdot A_k' = Y \cdot A_k \\ & \displaystyle{\sum_{i=1}^m [A_k']_{ij} = \sum_{i=1}^m [A_k]_{ij} = 0, \; \; \; j=1, \ldots, m} \\ & \displaystyle{[A_k']_{ij} \geq 0, [A_k]_{ij} \geq 0, \; \; \; i,j = 1, \ldots, m, \; i \not= j} \\ & \displaystyle{[A_k']_{ii} \leq 0, [A_k]_{ii} \leq 0, \; \; \; i = 1, \ldots, m} \end{array} \right.
\end{equation}
Note that in \eqref{realization} both the off-diagonal entries of $A_k$ and $A_k'$ are now decision variables.

As in Section \ref{section1}, we want to keep track of the structure of $A_k'$. The conditions corresponding to (\ref{density}) for the matrix $A_k'$ are
\begin{equation}
\label{density3}
\mbox{\textbf{(S2)}} \; \; \left\{ \; \; \begin{array}{ll} & \displaystyle{0 \leq [A_k']_{ij}-\epsilon \delta_{ij}, \; \; \; i,j = 1, \ldots, m, \; \; i \not= j} \\ & \displaystyle{0 \leq -[A_k']_{ij}+u_{ij} \delta_{ij}, \; \; \; i,j = 1, \ldots, m, \; \; i \not= j}  \\ & \displaystyle{\delta_{ij} \in \left\{ 0, 1 \right\}, \; \; \; i,j = 1, \ldots, m, i \not= j,} \end{array} \right.
\end{equation}
As before, the binary variables $\delta_{ij}$ keep track of whether a reaction is in the network $\mathcal{N}'$ or not and thus are capable of counting the number of reactions in $\mathcal{N}'$.

We also, however, want to permit $A_k$ to have a variable rate constant values within a fixed network structure. In order to fix this network structure, we introduce the binary variables $\gamma_{ij} \in \left\{ 0, 1 \right\}$, $i,j = 1, \ldots, m$, $i \not= j$, and the logical equations
\begin{equation}
\label{2342}
\gamma_{ij} = 1 \; \Longleftrightarrow \; [A_k]_{ij} > \epsilon, \; \; \; i,j = 1, \ldots, m, i \not= j
\end{equation}
for some $0 < \epsilon \ll 1$. In other words, the $\gamma_{ij}$'s keep track of whether the reaction $\mathcal{C}_j \to \mathcal{C}_i$ is in the network $\mathcal{N}$. 
The conditions required to allow the entries of $A_k$ to vary independently within this pre-defined structure are
\begin{equation}
\label{independent}
\mbox{\textbf{(Ind)}} \; \; \left\{ \begin{array}{l}
\displaystyle{0 \leq [A_k]_{ij} - \epsilon \gamma_{ij}, \; \; \; i,j = 1, \ldots, m, \; i \not= j} \\
\displaystyle{0 \leq -[A_k]_{ij} + u_{ij} \gamma_{ij}, \; \; \; i,j = 1, \ldots, m, \; i \not=j}
\end{array} \right.
\end{equation}
where
\begin{equation}
\label{gamma}
\gamma_{ij} = \left\{ \begin{array}{ll} 1, \; \; \; \; \; & \mbox{if }(\mathcal{C}_j,\mathcal{C}_i) \in \mathcal{R} \\ 0, & \mbox{otherwise.}\end{array} \right.
\end{equation}
\noindent Further constraints can be implemented to search through subspaces of the parameter spaces for alternative realizations. For instance, if we suspect the reaction rate for the reaction $\mathcal{C}_{j_1} \to \mathcal{C}_{i_1}$ is slaved to that of $\mathcal{C}_{j_2} \to \mathcal{C}_{i_2}$ we can add $[A_k]_{i_1j_1} = [A_k]_{i_2j_2}$ to the procedure, etc.

The conditions (\ref{realization}), (\ref{density3}) and (\ref{independent}) can be combined with the structural conditions for reversibility (\ref{reversible}) and weak reversibility (\ref{weakreversibility} and \ref{density2}) and the objective functions (\ref{sparse}) and (\ref{dense}) to search over the parameter space of $\mathcal{N}$ for sparse and dense alternative realizations $\mathcal{N}'$ which satisfy these further structural constraints.

\subsection{Complex Balanced Realizations}

It is also desirable to explore the parameter space of $\mathcal{N}$ for alternative realizations $\mathcal{N}'$ which are complex balanced. The linear constraints (\ref{db2}) and (\ref{cb2}), however, cannot be used in the parameter-independent case since the required equilibrium concentrations $\mathbf{x}^* \in \mathbb{R}_{>0}^n$ depend on the rate constants for $\mathcal{N}$ which are not specified.

We might expect, however, since all weakly reversible networks are complex balanced for some choice of rate constants, that if a network $\mathcal{N}$ has a weakly reversible alternative realization $\mathcal{N}'$ for some other choice of rate constants then it also has a complex balanced alternative realization $\mathcal{N}''$ for some choice of rate constants. That is to say, in the parameter-independent optimization procedure weak reversibility is sufficient to demonstrate complex balancing. The main result of this subsection guarantees this (Theorem \ref{bigtheorem}).

First, however, we need the following results about weakly reversible networks.

\begin{theorem}[Theorem 3.1, \cite{G-H}; Proposition 4.1, \cite{F3}]
\label{wr}
Let $A_k$ be a kinetics matrix and let $\Lambda_i$, $i=1, \ldots, \ell,$ denote the support of the $i^{th}$ linkage class. Then the reaction graph corresponding to $A_k$ is weakly reversible if and only if there is a basis of ker$(A_k)$, $\left\{ \mathbf{b}^{(1)}, \ldots, \mathbf{b}^{(\ell)} \right\}$, such that, for $i=1, \ldots, \ell$,
\[\mathbf{b}^{(i)} = \left\{ \begin{array}{ll} b^{(i)}_j > 0, \hspace{0.3in} & j \in \Lambda_i \\ b^{(i)}_j = 0, & j \not\in \Lambda_i. \end{array} \right.\]
\end{theorem}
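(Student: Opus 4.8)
The plan is to treat the two implications separately after first exploiting the block structure of $A_k$. Since, by definition, a reaction can only join two complexes lying in the same linkage class, relabelling the complexes so that each linkage class occupies a consecutive block of indices turns $A_k$ into a block-diagonal matrix $A_k = \mathrm{diag}(A_k^{(1)}, \ldots, A_k^{(\ell)})$, where $A_k^{(i)}$ is the Kirchhoff matrix of the subgraph supported on $\Lambda_i$. Hence $\ker(A_k) = \bigoplus_{i=1}^{\ell} \ker(A_k^{(i)})$, each summand embedded in $\mathbb{R}^m$ by zero-padding off $\Lambda_i$; in particular a vector with support contained in $\Lambda_i$ lies in $\ker(A_k)$ if and only if its restriction to $\Lambda_i$ lies in $\ker(A_k^{(i)})$. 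Because the $\Lambda_i$ are pairwise disjoint, any family of nonzero vectors $\mathbf{b}^{(i)}$ supported on the respective $\Lambda_i$ is automatically linearly independent, so the whole statement reduces to the case of a single linkage class together with a dimension count.

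For the direction ``weakly reversible $\Rightarrow$ such a basis exists'', weak reversibility means that each subgraph on $\Lambda_i$ is strongly connected, and I would invoke the Matrix--Tree (Markov chain tree) theorem --- which in the reaction-network setting goes back to Horn and Jackson \cite{H-J1} --- to conclude that $\ker(A_k^{(i)})$ is one-dimensional and is spanned by the vector whose $C_j$-component is the sum, over all spanning trees of the subgraph rooted at $C_j$, of the products of the edge weights. Strong connectivity guarantees at least one rooted spanning tree for every choice of root, so this spanning vector is strictly positive; zero-padding it yields $\mathbf{b}^{(i)}$ with exactly the asserted sign pattern. Since each block contributes exactly one dimension and the supports are disjoint, $\{\mathbf{b}^{(1)}, \ldots, \mathbf{b}^{(\ell)}\}$ is a basis of $\ker(A_k)$.

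For the converse, suppose such a basis exists. Fix $i$, put $\rho = \mathbf{b}^{(i)}|_{\Lambda_i}$, so $\rho > 0$ and $A_k^{(i)}\rho = 0$; reading off the row indexed by $C_p$ gives the balance identity $\sum_{q} k(q,p)\rho_q = \big(\sum_{q} k(p,q)\big)\rho_p$ at each $C_p \in \Lambda_i$. If the subgraph on $\Lambda_i$ were not strongly connected, its condensation would be a directed acyclic graph on at least two nodes and would possess a source component, i.e. a proper nonempty $S \subsetneq \Lambda_i$ with no reaction entering $S$ from $\Lambda_i \setminus S$. Summing the balance identities over $p \in S$, the contributions of reactions internal to $S$ cancel and the incoming contributions are absent, leaving $\sum_{p \in S}\sum_{q \notin S} k(p,q)\rho_p = 0$; since each $\rho_p > 0$, this forces $k(p,q) = 0$ for all $p \in S$ and $q \notin S$, so $S$ also has no reaction leaving it. Then $S$ is a union of undirected connected components, which contradicts $\Lambda_i$ being a single linkage class unless $S = \Lambda_i$ --- contradicting $S \subsetneq \Lambda_i$. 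Hence every $\Lambda_i$ is strongly connected, i.e. the network is weakly reversible.

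The main obstacle is the forward direction's reliance on the Matrix--Tree theorem to deliver a strictly positive kernel vector of the correct dimension; if one prefers not to cite it, the one-dimensionality and positivity can be recovered from Perron--Frobenius applied to $A_k^{(i)} + cI$ for $c$ large (its off-diagonal part is the weighted adjacency matrix of a strongly connected, hence irreducible, digraph), with the property that every column sum equals $c$ pinning the Perron eigenvalue at $c$. The converse requires only elementary bookkeeping, the sole point of care being the degenerate case $S = \Lambda_i$, in which there is nothing to prove.
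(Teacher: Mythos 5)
The paper does not actually prove this statement: it is imported verbatim from the literature (Theorem 3.1 of \cite{G-H} and Proposition 4.1 of \cite{F3}), so there is no in-paper argument to compare yours against. On its own terms your proof is correct and complete. The block-diagonal reduction to a single linkage class is the right first move, and both directions are sound: the forward direction via the Matrix--Tree theorem (or, more cleanly, your Perron--Frobenius fallback applied to $A_k^{(i)} + cI$, where the common column sum $c$ pins the spectral radius and irreducibility gives simplicity of that eigenvalue together with a strictly positive eigenvector, so each block contributes exactly one kernel dimension); and the converse via summing the balance identities over a source component $S$ of the condensation, where cancellation of the internal terms forces all outgoing rates from $S$ to vanish and hence disconnects $S$ from its complement, contradicting connectedness of the linkage class. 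Two cosmetic remarks: in the Matrix--Tree formulation the spanning trees should be specified as \emph{in-trees} (arborescences with all edges oriented toward the root), which is the correct orientation for the right kernel of a column-sum-zero Kirchhoff matrix; and in the converse the worry about $S = \Lambda_i$ never arises, since $S$ is a single node of a condensation with at least two nodes and is therefore automatically proper. Neither point is a gap.
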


\begin{theorem}[Theorem 1, \cite{D-F-J-N}]
\label{feinberg}
Under the assumption of mass-action kinetics, weakly reversible chemical reaction networks possess at least one positive equilibrium concentration within each positive invariant space of the system.
\end{theorem}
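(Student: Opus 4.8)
The plan is to exhibit a positive equilibrium in a prescribed invariant space as a zero of the right-hand side of (\ref{de}), and to locate that zero by a fixed-point / topological-degree argument in which weak reversibility is used only to control the behaviour ``at infinity.'' Fix a positive invariant space $P = (\mathbf{a} + S) \cap \mathbb{R}^n_{\geq 0}$, where $S = \mathrm{span}\{ \text{reaction vectors} \}$ is the stoichiometric subspace. Since $Y \cdot A_k \cdot \Psi(\mathbf{x}) \in S$ for every $\mathbf{x}$, the map $f(\mathbf{x}) = Y \cdot A_k \cdot \Psi(\mathbf{x})$ restricts to a continuous vector field on $P$ tangent to its affine hull, and it suffices to produce a zero of this field in the relative interior $P^{\circ} = (\mathbf{a} + S) \cap \mathbb{R}^n_{>0}$.

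First I would check that $f$ points into $P$ along the coordinate faces of $\partial P$. This is the usual mass-action observation: if $x_i = 0$ then every reaction whose reactant complex contains $X_i$ contributes a vanishing monomial (\ref{psi}), so $\dot{x}_i \geq 0$ by (\ref{de}); together with tangency to $S$ this says the flow cannot exit $P$ through its coordinate boundary. (One takes the customary care on lower-dimensional faces, and, to make a clean Brouwer statement, one may replace $f$ by $\mathbf{x} \mapsto \mathrm{proj}_P(\mathbf{x} + \varepsilon f(\mathbf{x}))$ for small $\varepsilon$.) When $P$ happens to be compact this already yields a zero in $P^{\circ}$: a vector field on a compact convex body pointing inward along the boundary has a fixed point in the interior by Brouwer's theorem, equivalently has topological degree $\pm 1 \neq 0$.

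The substance of the proof is therefore the unbounded case, and this is the only place weak reversibility is needed. By Theorem \ref{wr} there is a strictly positive vector $\mathbf{b} = \sum_{i=1}^{\ell}\mathbf{b}^{(i)} \in \ker(A_k)$ (each complex lies in some linkage class, so the sum is positive in every coordinate). I would use the directed-cycle structure behind $\mathbf{b}$ to bound $f$ near infinity: along any recession direction of $P$ the dominant, highest-degree monomial terms of $f$ come from the ``maximal'' reactant complexes, and weak reversibility forces every such complex to sit on a directed cycle, so the leading-order contribution drives trajectories back toward $P^{\circ}$. Concretely I would intersect $P$ with a large box, verify that for the box taken large enough $f$ has no zeros on, and points inward along, the newly-introduced boundary pieces, and then read off a zero in $P^{\circ}$ by degree theory applied to the truncation. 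The hard part is making this estimate uniform over all recession directions at once — this is where one must organise the complexes of each linkage class by a suitable degree ordering and genuinely exploit the weak-reversibility cycle structure of Theorem \ref{wr}; everything else is routine.

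As a consistency check, the argument degenerates correctly in the zero-deficiency case: there $Y \cdot A_k \cdot \Psi(\mathbf{x}) = \mathbf{0}$ is equivalent to $A_k \cdot \Psi(\mathbf{x}) = \mathbf{0}$, i.e.\ to $\Psi(\mathbf{x})$ lying in $\ker(A_k)$, which by Theorem \ref{wr} forces $\Psi(\mathbf{x})$ to be a positive combination of the $\mathbf{b}^{(i)}$; a Birch-type theorem on the monomial map $\mathbf{x} \mapsto \Psi(\mathbf{x})$ then furnishes exactly one such $\mathbf{x}$ in each invariant space, recovering the complex-balanced existence statement underlying Theorem \ref{hornjackson}. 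When the deficiency is positive, $\Psi(\mathbf{x})$ need not lie in $\ker(A_k)$ at equilibrium, so the Birch argument no longer suffices and the topological argument above, with its weak-reversibility estimate at infinity as the crux, is required.
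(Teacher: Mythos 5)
The paper does not prove this statement at all: Theorem \ref{feinberg} is imported verbatim from \cite{D-F-J-N} as a known result, so there is no internal proof to compare against. Your sketch does follow the general strategy of the proofs that exist in the literature (a Brouwer/degree argument on a truncation of the positive stoichiometric compatibility class, with weak reversibility entering only through the behaviour near the boundary at infinity), so the outline is not wrong in spirit. But the proposal has a genuine gap, and it sits exactly where you acknowledge ``the hard part'' to be: the entire mathematical content of this theorem is the estimate on the artificial boundary of the truncation, and you do not carry it out. Worse, the specific mechanism you propose --- choose a large box and show that $f$ points \emph{inward} along the newly introduced faces --- is too strong to be available. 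If such a box existed for every weakly reversible network, every trajectory in the compatibility class would eventually be trapped in it, which is (a relative of) the boundedness/permanence conjecture for weakly reversible mass-action systems; that is an open problem in general, known only in special cases such as single--linkage-class networks. The heuristic that ``maximal reactant complexes lie on directed cycles, so the leading-order terms drive trajectories back'' does not survive inspection: a directed cycle in the reaction graph does not translate into an inward-pointing net flux in species space along an arbitrary recession direction, because the monomials $\Psi_j(\mathbf{x})$ attached to different complexes on the cycle can have wildly different growth rates along that direction. The actual proofs (the fixed-point argument of \cite{D-F-J-N}, completed rigorously by Boros) must replace the naive box by a carefully constructed compact convex set built from a hierarchical ordering of the complexes, and establishing the required boundary behaviour there occupies essentially the whole argument. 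So what you have is a correct framing of the problem plus a correct reduction of the compact case, with the theorem itself left unproved.

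Two smaller points. Your zero-deficiency consistency check is fine ($Y\cdot A_k\cdot\Psi(\mathbf{x})=\mathbf{0}$ does force $A_k\cdot\Psi(\mathbf{x})=\mathbf{0}$ when the deficiency vanishes, and Birch's theorem then gives existence and uniqueness), but note this recovers only the complex-balanced case and gives no leverage on positive deficiency. Also, the positive kernel vector $\mathbf{b}$ of Theorem \ref{wr} lives in $\mathbb{R}^m$ (indexed by complexes), not $\mathbb{R}^n$; this mirrors a typo in the surrounding text but is worth keeping straight, since conflating complex space with species space obscures exactly the monomial-growth issue that makes the boundary estimate hard.
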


An immediate consequence of Theorem \ref{wr} is that a network is weakly reversible if and only if there is a vector $\mathbf{b} \in \mathbb{R}_{>0}^n$ in the kernel of $A_k$. We will exploit this fact in the next result.

\begin{theorem}
\label{bigtheorem}
Suppose there is a choice of rate constants such that the network $\mathcal{N}$ is dynamically equivalent to $\mathcal{N}'$ and $\mathcal{N}'$ is weakly reversible. Then there exists a choice of rate constants such that the network $\mathcal{N}$ is dynamically equivalent to $\mathcal{N}''$ where $\mathcal{N}''$ is complex balanced. Furthermore $\mathcal{N}''$ can be selected to have the same structure as $\mathcal{N}'$.
\end{theorem}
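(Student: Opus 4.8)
The plan is to use the flexibility, built into the statement, of re-choosing the rate constants of $\mathcal{N}$ as well: I will build $\mathcal{N}''$ together with a new parametrization of $\mathcal{N}$ by applying one and the same positive column rescaling to the two Kirchhoff matrices. First I would note that, since $\mathcal{N}$ is dynamically equivalent to $\mathcal{N}'$ for the given rate constants, their kinetics matrices satisfy $Y \cdot A_k = Y \cdot A_k' =: M$. Since $\mathcal{N}'$ is weakly reversible, the consequence of Theorem \ref{wr} recorded above supplies a strictly positive vector $\mathbf{b} \in \mathbb{R}^m_{>0}$ lying in the kernel of $A_k'$.

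The construction would then be: pick any $\mathbf{x}^* \in \mathbb{R}^n_{>0}$ (for instance $\mathbf{x}^* = \mathbf{1}$), put $r_j := b_j / \Psi_j(\mathbf{x}^*) > 0$ for $j = 1, \ldots, m$, let $R := \mbox{diag}\{\mathbf{r}\}$, and define $A_k'' := A_k' \cdot R$ and $\bar{A}_k := A_k \cdot R$. I would then verify three things. (i) Right multiplication by the positive diagonal matrix $R$ rescales columns, so it sends a Kirchhoff matrix to a Kirchhoff matrix with the identical off-diagonal support; hence $A_k''$ is a valid kinetics matrix with the structure of $\mathcal{N}'$ and $\bar{A}_k$ is a valid kinetics matrix with the structure of $\mathcal{N}$. (ii) $Y \cdot \bar{A}_k = Y \cdot A_k \cdot R = M \cdot R = Y \cdot A_k' \cdot R = Y \cdot A_k''$, so the network $\mathcal{N}''$ carrying kinetics matrix $A_k''$ is dynamically equivalent to $\mathcal{N}$ equipped with the rate constants encoded by $\bar{A}_k$. (iii) By the choice of $\mathbf{r}$ we have $R \cdot \Psi(\mathbf{x}^*) = \mathbf{b}$, whence $A_k'' \cdot \Psi(\mathbf{x}^*) = A_k' \cdot R \cdot \Psi(\mathbf{x}^*) = A_k' \cdot \mathbf{b} = \mathbf{0}$; thus $\mathbf{x}^*$ is a complex balanced equilibrium concentration of $\mathcal{N}''$, and Theorem \ref{hornjackson} then forces $\mathcal{N}''$ to be complex balanced at every equilibrium concentration, i.e. complex balanced. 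Since the structure of $A_k''$ equals that of $A_k'$, the final assertion about the structure of $\mathcal{N}''$ is immediate.

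The single genuine idea is step (ii), and it is also the only place one could go wrong: rescaling $A_k'$ alone to manufacture a complex balanced matrix would in general wreck dynamical equivalence with the fixed network $\mathcal{N}$, so one must absorb the same rescaling $R$ into $A_k$ — which is exactly the re-parametrization of $\mathcal{N}$ that the statement permits. Everything else is the routine check that a positive column rescaling preserves the Kirchhoff structure, together with appeals to Theorem \ref{wr} (for $\mathbf{b}$) and Theorem \ref{hornjackson} (to upgrade complex balancing at $\mathbf{x}^*$ to complex balancing of the network). I would also point out that neither Theorem \ref{feinberg} nor the full linkage-class-supported kernel basis of Theorem \ref{wr} is actually needed here; a single strictly positive kernel vector of $A_k'$ suffices.
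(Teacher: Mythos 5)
Your construction is exactly the paper's: both define $A_k'' = A_k' \cdot \mathrm{diag}\{\mathbf{b}/\Psi(\mathbf{x}^*)\}$ using the positive kernel vector $\mathbf{b}$ of $A_k'$ from Theorem \ref{wr}, and absorb the same column rescaling into $A_k$ to obtain the new rate constants for $\mathcal{N}$, so the argument is correct and essentially identical. Your observation that $\mathbf{x}^*$ may be taken arbitrary in $\mathbb{R}^n_{>0}$ (so that Theorem \ref{feinberg} is not actually needed) is a valid minor streamlining of the paper's version, which instead takes $\mathbf{x}^*$ to be an equilibrium concentration.
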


\begin{proof}
Let $A_k$ be the kinetics matrix associated with $\mathcal{N}$ and $A_k'$ be the kinetics matrix associated with $\mathcal{N}'$, and suppose that $\mathcal{N}'$ is weakly reversible. Let $\mathbf{b} \in \mathbb{R}_{>0}^n$ denote the positive vector in ker$(A_k)$ guaranteed to exist by Theorem \ref{wr} and $\mathbf{x}^* \in \mathbb{R}_{>0}^n$ be any positive equilibrium concentration of (\ref{de}) guaranteed to exist by Theorem \ref{feinberg}.

We now define a new network $\mathcal{N}''$ with the associated kinetics matrix
\begin{equation}
\label{construct}
A_k'' = A_k' \cdot \mbox{diag} \left\{ \frac{\mathbf{b}}{\Psi(\mathbf{x}^*)} \right\}
\end{equation}
where we define vector division to be componentwise division, i.e. $\mathbf{x} / \mathbf{y} = \left[ x_1/y_1, x_2/y_2, \ldots, x_n/y_n \right]$ for $\mathbf{x}, \mathbf{y} \in \mathbb{R}^n$. Notice that all the terms in the definition of $A_k''$ can be determined under the assumption that $\mathcal{N}'$ is weakly reversible. We have that
\[A_k'' \cdot \Psi(\mathbf{x}^*) = A_k' \cdot \mathbf{b} = \mathbf{0}\]
since $\mathbf{b} \in $ ker$(A_k')$ so that $\mathcal{N}''$ is complex balanced at $\mathbf{x}^* \in \mathbb{R}_{>0}^m$. Furthermore, we have that
\[Y \cdot A_k'' = Y \cdot A_k' \cdot \mbox{diag} \left\{ \frac{\mathbf{b}}{\Psi(\mathbf{x}^*)} \right\} = Y \cdot A_k \cdot \mbox{diag} \left\{ \frac{\mathbf{b}}{\Psi(\mathbf{x}^*)} \right\}.\]
It is clear that $A_k \cdot$ diag$\left\{ \mathbf{b} / \Psi(\mathbf{x}^*) \right\}$ has the same structure as $A_k$ so that this corresponds to different choice of rate constants for the network $\mathcal{N}$. The result follows.
\end{proof}

Note that, while we do not have linear constraints capable of determining complex balanced networks explicitly, we can always construct a complex balanced network from a weakly reversible or reversible network output by the optimization procedure. (Although this may not hold if further restrictions on the parameter space of $\mathcal{N}$ have been imposed.)

It is worth noting that the corresponding result for reversible and detailed balanced networks does not follow in the same manner as the proof of Theorem \ref{bigtheorem}. This is because, for reversible networks with cycles, it is known that the detailed balancing condition entails further conditions on the rate constants above and beyond complex balancing \cite{Fe,D-M}. We can still, however, construct a complex balanced realization from an arbitrary reversible network according to Theorem \ref{bigtheorem}. Since detailed balancing implies no further dynamical information above and beyond complex balancing, for all practical purposes this is as far as we need to go.

It is also worth noting that this algorithm cannot determine networks which are linear conjugate to a given structurally-fixed network; it can only find dynamically equivalent networks. This is clear since linear conjugacy requires that
\begin{equation}
\label{123}
Y \cdot A_k = T \cdot Y \cdot A_b
\end{equation}
where $T = \mbox{diag} \left\{ \mathbf{c} \right\}$ and $\mathbf{c} \in \mathbb{R}_{>0}^m$. Regardless of whether we consider the transformation $T$ on the left-hand-side or right-hand-side of (\ref{123}), it produces a non-linear condition and therefore cannot currently be placed within the existing MILP framework.\\

\subsection{Examples}

In this section, we introduce a few examples which illustrate how the algorithm for producing weakly reversible, reversible, and complex balanced realizations of a structurally-fixed but parameter-variable network works.\\

\noindent \textbf{Example 3:} Consider the reaction network $\mathcal{N}$ given by
\[\mathcal{N}: \; \; \; \; \; \; \; \; 2X_1 + X_2 \; \stackrel{\alpha}{\longrightarrow} \; \; 3X_1 \; \; \mathop{\stackrel{1}{\rightleftarrows}}_{1} \; \; 3X_2 \; \; \stackrel{\alpha}{\longleftarrow} \; X_1 + 2X_2\]
where $\alpha > 0$. Despite the reversible step in the central reaction, $\mathcal{N}$ is neither fully nor weakly reversible and therefore the dynamics do not fall within the scope of the theory of such networks.

We want to check whether there are weakly or fully reversible networks which are dynamically equivalent to $\mathcal{N}$ for some value of $\alpha$. We set $C_1 = 2X_1+X_2$, $C_2 = 3X_1$, $C_3 = 3X_2$, and $C_4 = X_1 + 2X_2$. Searching for a sparse network (\ref{sparse}) in GLPK over the weakly reversible constraints (\ref{realization}), (\ref{density3}), (\ref{independent}), (\ref{weakreversibility}) and (\ref{density2}), with the additional constraints $[A_k]_{21} = [A_k]_{34}$ and $[A_k]_{23}=[A_k]_{32}=1$ and the bounds $\epsilon = 1/20$ and $u_{ij} = 20$ $i,j=1, \ldots 4$, $i \not= j$, gives the alternative realization
\[ \mathcal{N}': \; \; \; \; \; \; \; \; \begin{array}{c} \; 2X_1 + X_2 \; \stackrel{1/20}{\longrightarrow} \; 3X_1 \\ {}^{3/2} \uparrow \; \; \; \; \; \; \; \; \; \; \; \; \; \; \; \; \; \; \; \; \; \;  \; \; \downarrow_{3/2} \\ \; \; 3X_2 \; \stackrel{1/20}{\longleftarrow} \; X_1 + 2X_2 \end{array}\]
and $\alpha = 1/20$ for the original network $\mathcal{N}$. The network $\mathcal{N}'$ has the corresponding kinetics matrix
\[A_k' = \left[ \begin{array}{cccc} -\frac{1}{20} & 0 & \frac{3}{2} & 0 \\ \frac{1}{20} & -\frac{3}{2} & 0 & 0 \\ 0 & 0 & -\frac{3}{2} & \frac{1}{20} \\ 0 & \frac{3}{2} & 0 & -\frac{1}{20} \end{array} \right]\]
and the positive equilibrium concentration $(x_1^*,x_2^*)=(1,1)$. It can easily be checked that $\mathcal{N}'$ is not complex balanced at this equilibrium concentration.

In order to construct a complex balanced network $\mathcal{N}''$ with the same structure as $\mathcal{N}'$ by (\ref{construct}) we need to determine a vector $\mathbf{b} \in \mathbb{R}_{>0}^4$ such that $\mathbf{b} \in $ ker$(A_k')$. It can be easily checked that the vector $\mathbf{b} = [ 3/2 \; \; 1/20 \; \; 1/20 \; \; 3/2]^T$ works; however, using this choice of $\mathbf{b}$ produces a set of rate constants by 
\[A_k \cdot \mbox{diag} \left\{ \frac{\mathbf{b}}{\Psi(\mathbf{x}^*)} \right\}\]
which clearly violates the condition $[A_k]_{23}=[A_k]_{32}=1$. This can be solved by choosing another multiple of $\mathbf{b}$. In fact, we can see that for the vector $\mathbf{b} = [ 30 \; 1 \; 1 \; 30]^T$ the appropriate rate constant choices for $\mathcal{N}$ occur by choosing $\alpha = 3/2$ and the corresponding complex balanced realization given by (\ref{construct}) is
\[ \mathcal{N}'': \; \; \; \; \; \; \; \; \begin{array}{c} \; 2X_1 + X_2 \; \stackrel{3/2}{\longrightarrow} \; 3X_1 \\ {}^{3/2} \uparrow \; \; \; \; \; \; \; \; \; \; \; \; \; \; \; \; \; \; \; \; \; \;  \; \; \downarrow_{\; 3/2} \\ \; \; 3X_2 \; \stackrel{3/2}{\longleftarrow} \; X_1 + 2X_2 \end{array}\]
(This corresponds to a scaling by $3/2$ of the `block' network given in \cite{H-J1}. From the analysis presented in that paper, we know the network is complex balanced only for this particular value of $\alpha$.)

We may also be interested in fully reversible alternative realizations of $\mathcal{N}$. Replacing the constraints (\ref{weakreversibility}) and (\ref{density2}) in the above algorithm by (\ref{reversible}) and running in GLPK gives the network
\[\mathcal{N}': \; \; \; \; \; \; \; \; 2X_1 + X_2 \; \mathop{\stackrel{1/20}{\rightleftarrows}}_{3} \; 3X_1, \; \; \; \; \; \; \; \; X_1 + 2 X_2 \; \mathop{\stackrel{1/20}{\rightleftarrows}}_{3} \; 3X_2\]
and $\alpha=1/20$ for the network $\mathcal{N}$. The corresponding kinetics matrix is
\[A_k' = \left[ \begin{array}{cccc} -\frac{1}{20} & 3 & 0 & 0 \\ \frac{1}{20} & -3 & 0 & 0 \\ 0 & 0 & -3 & \frac{1}{20} \\ 0 & 0 & 3 & -\frac{1}{20} \end{array} \right]\]
which has the positive equilibrium concentration $(x_1^*,x_2^*)=(1,1)$ which is neither complex nor detailed balanced. In order to find a complex balanced network with the same structure as $\mathcal{N}'$ we notice that the kernel of $A_k'$ is given by the span of $[ 3 \; \; 1/20 \; \; 0 \; \; 0]^T$ and $[ 0 \; \; 0 \; \; 1/20 \; \; 3]^T$. In order to preserve the property $[A_k]_{23}=[A_k]_{32}=1$ for the network $\mathcal{N}$ we need to choose $\mathbf{b} = [60 \; 1 \; 1 \; 60]^T$. This gives the value of $\alpha = 3$ for the network $\mathcal{N}$ and the complex balanced realization
\[\mathcal{N}'': \; \; \; \; \; \; \; \; 2X_1 + X_2 \; \mathop{\stackrel{3}{\rightleftarrows}}_{3} \; 3X_1, \; \; \; \; \; \; \; \; X_1 + 2 X_2 \; \mathop{\stackrel{3}{\rightleftarrows}}_{3} \; 3X_2.\]
It can easily be checked that $\mathcal{N}''$ is also detailed balanced.\\

\noindent \textbf{Example 4:} Consider the reaction network $\mathcal{N}$ given by
\[\mathcal{N}: \; \; \; \; \; \; \; \; 2X_1 \; \stackrel{1}{\longrightarrow} \; X_1 + X_2 \; \stackrel{1}{\longleftarrow} \; 2X_2\]
and the reversible alternative realization $\mathcal{N}'$ given by
\[\mathcal{N}': \; \; \; \; \; \; \; \; 2 X_1 \; \; \mathop{\stackrel{1/2}{\rightleftarrows}}_{1/2} \; \; X_1 + X_2 \; \; \mathop{\stackrel{1/2}{\rightleftarrows}}_{3/4} \; \; 2X_2 \; \; \mathop{\stackrel{1/8}{\rightleftarrows}}_{1/4} \; \; 2X_1.\]

If we make the associations $C_1 = 2X_1$, $C_2 = X_1 + X_2$ and $C_3 = 2X_2$, then the network $\mathcal{N}'$ has the corresponding kinetics matrix
\[A_k' = \left[ \begin{array}{ccc} -\frac{3}{4} & \frac{1}{2} & \frac{1}{8} \\ \frac{1}{2} & -1 & \frac{3}{4} \\ \frac{1}{4} & \frac{1}{2} & -\frac{7}{8} \end{array} \right].\]
Choosing any equilibrium concentration $\mathbf{x}^* \in \mathbb{R}_{>0}^2$ we have $A_k' \cdot \Psi(\mathbf{x}^*) \not= \mathbf{0}$ so that the network is not complex balanced.

We wish to construct a complex balanced realization $\mathcal{N}''$ using the methodology outlined in the proof of Theorem \ref{bigtheorem}. We choose the equilibrium value $(x_1^*,x_2^*)=(1,1)$ so that $\Psi(\mathbf{x}^*) = [1 \; \; 1 \; \; 1]^T$ and the kernel vector $\mathbf{b} = [1 \; \; 5/4 \; \; 1]^T$. According to (\ref{construct}) we have
\[A_k'' =  \left[ \begin{array}{ccc} -\frac{3}{4} & \frac{1}{2} & \frac{1}{8} \\ \frac{1}{2} & -1 & \frac{3}{4} \\ \frac{1}{4} & \frac{1}{2} & -\frac{7}{8} \end{array} \right] \left[ \begin{array}{ccc} 1 & 0 & 0 \\ 0 & \frac{5}{4} & 0 \\ 0 & 0 & 1 \end{array} \right] = \left[ \begin{array}{ccc} -\frac{3}{4} & \frac{5}{8} & \frac{1}{8} \\ \frac{1}{2} & -\frac{5}{4} & \frac{3}{4} \\ \frac{1}{4} & \frac{5}{8} & -\frac{7}{8} \end{array} \right].\]
It can be easily checked that the corresponding network $\mathcal{N}''$ is complex balanced and is dynamically equivalent to $\mathcal{N}$. (In general the rate constants of $\mathcal{N}$ may change; however, in this case we have $A_k \cdot $ diag$\left\{ \mathbf{b} / \Psi(\mathbf{x}^*) \right\} = A_k$.)

Despite being complex balanced and reversible, the realization $\mathcal{N}''$ is not detailed balanced according to (\ref{db}). We might wonder if we can construct an alternative realization in a similar manner as (\ref{construct}). We can easily see, however, that applying the detailed balancing conditions to $A_k \cdot $ diag$\left\{ \mathbf{c} \right\}$, where $\mathbf{c} \in \mathbb{R}_{>0}^3$ is arbitrary, produces the unsatisfiable set of conditions
\[\frac{c_1}{2} x_1^2 = \frac{c_2}{2} x_1 x_2, \; \; \; \; \; \frac{c_1}{4} x_1^2 = \frac{c_3}{8} x_2^2, \; \; \; \; \; \frac{c_2}{2} x_1 x_2 = \frac{3c_3}{4} x_2^2.\]
In other words, we cannot always construct a detailed balanced network from an arbitrary reversible network $\mathcal{N}'$ in the same constructive manner used for complex balanced networks from weakly reversible networks.

\section{Conclusions and Future Work}

In this paper, new computational methods were presented for finding networks which are linearly conjugate and dynamically equivalent to a given chemical reaction network endowed with mass-action kinetics.

It was demonstrated that finding a dense or sparse reversible, detailed balanced and complex balanced network which is linearly conjugate to a given network can be framed as a MILP optimization problem. The case of determining conjugate networks which have the greatest and fewest number of complexes was also extended to the case of non-trivial linear conjugacies.

It was shown that, similarly to the case of dynamical equivalence \cite{Sz-H-P}, the graph structure of linearly conjugate dense networks containing the maximal number of nonzero reaction rate coefficients is unique, and that the unweighted directed reaction graph of any linearly conjugate network is a proper subgraph of the unweighted directed reaction graph of the dense one if the set of complexes is given. Moreover, it was proved that arbitrary equilibrium points of the initial network can be used for the existence checking and computation of linearly conjugate complex balanced and detailed balanced networks.

Additionally, the problem of dynamical equivalence was studied when only the structure of the initial network $\mathcal{N}$ is fixed, but its rate constants can take any positive value. It was shown that in this case, the computation of dense and sparse weakly reversible and reversible networks $\mathcal{N}'$ can also be formulated as a MILP problem. It was also shown that complex balanced networks $\mathcal{N}''$ with identical structure to $\mathcal{N}'$ can be constructed from these realizations and corresponded to an alternative choice of rate constants for $\mathcal{N}$. This modification allows us to scan through a range of parameters as part of the procedure and therefore answer questions about a network based on its structure alone. The operation of the developed methods were illustrated on numerical examples. The achievements further extend the applicability range of many existing structure-dependent results of chemical reaction network theory.

Further areas of research and open questions include:
\begin{enumerate}
\item
The procedure introduced to determine structurally dynamically equivalence networks is as of yet unable to search through non-trivial linearly conjugate networks in a linear manner. As such, many networks with potentially insightful information about the dynamics of a given network are being overlooked. Incorporating non-trivial linear conjugacy into a manageable optimization framework is therefore of primary interest.
\item
The results obtained so far depend on the reaction networks under consideration having mass-action kinetics (\ref{de}). Conjugacy and computational results for other widely-used kinetic schemes (e.g. Michaelis-Menten kinetics \cite{M-M}, Hill kinetics \cite{Hi}) would greatly expand the scope of applicability of these methods.
\item
There are many classes of networks with known behaviour lying outside the scope of weakly reversible network theory \cite{A-S2,C-F1,C-F2}. Determining constraints which could restrict our search to within these classes of networks would broaden the scope of dynamical behaviours (e.g. periodicity, oscillatory behaviour, multistability, etc.) we could guarantee through this computational procedure.
\end{enumerate}

\noindent \textbf{Acknowledgements:} M. Johnston and D. Siegel acknowledge the support of D. Siegel's Natural Sciences and Engineering Research Council of Canada Discovery Grant.  G. Szederk\'{e}nyi acknowledges the support of the Hungarian National Research Fund through grant no. OTKA K-83440 as well as the support of project CAFE (Computer Aided Process for Food Engineering)  FP7-KBBE-2007-1
(Grant no: 212754).

\bibliographystyle{plain}
\bibliography{myrefs}

\end{document}